\definecolor{light-gray}{RGB}{240,240,240}
\lstdefinelanguage{Julia}%
  {morekeywords={abstract,break,case,catch,const,continue,do,else,elseif,%
      end,export,false,for,function,immutable,import,importall,if,in,%
      macro,module,otherwise,quote,return,switch,true,try,type,typealias,%
      using,while},%
   sensitive=true,%
   morecomment=[l]\#,%
   morecomment=[n]{\#=}{=\#},%
   morestring=[s]{"}{"},%
   morestring=[m]{'}{'},%
}[keywords,comments,strings]%
\bfseries\color{blue},
\newtheorem{theorem}{Theorem}
\numberwithin{theorem}{section}
\newtheorem{lemma}[theorem]{Lemma}
\theoremstyle{definition}
\newtheorem{definition}[theorem]{Definition}
\newtheorem{remark}[theorem]{Remark}
\newtheorem{example}[theorem]{Example}
\newcommand{\RR}{\mathbb{R}}
\newcommand{\QQ}{\mathbb{Q}}
\newcommand{\ZZ}{\mathbb{Z}}
\newcommand{\PP}{\mathbb{P}}
\newcommand{\CC}{\mathbb{C}}
\date{}
\title{\textbf{Learning Algebraic Varieties from Samples}}
\author{Paul Breiding, Sara Kali\v snik,
 Bernd Sturmfels and Madeleine Weinstein }
\begin{document}
\maketitle
\vspace{-0.75cm}
 \begin{abstract}
 \noindent
 We seek to determine a real algebraic variety from
 a fixed finite subset of points. Existing methods
  are studied and new methods are developed. Our focus lies on
aspects of   topology and algebraic geometry,  such as
 dimension and defining polynomials.
All algorithms are tested on a range of datasets
and made available in a {\tt Julia} package.
      \end{abstract}
\vspace{-0.5cm}
\section{Introduction}

This paper addresses a fundamental problem at the interface of
data science and algebraic geometry.
Given a sample of points $\Omega=\{u^{(1)}, u^{(2)}, \ldots, u^{(m)}\}$
from an unknown variety $V$ in~$\RR^n$, our task is to learn
as much information about $V$  as possible.  No assumptions
on the variety~$V$, the sampling, or the distribution on $V$ are made.
   There can be noise due to rounding, so the points $u^{(i)}$ do not necessarily lie exactly on the variety from which they have been sampled. The variety $V$ is allowed to be singular or reducible. We also consider the
case where $V$ lives
in the projective space $\PP^{n-1}_\RR$.
 We are interested in  questions such~as:

\medskip
\begin{minipage}{0.55\textwidth}
\begin{enumerate}
\item What is the dimension of $V$?
\item Which polynomials vanish on $V$?
\item What is the degree of $V$?
\item What are the irreducible components of $V$?
\item What are the homology groups of $V$?
\end{enumerate}
\end{minipage}
\hspace{0.05\textwidth}
\begin{minipage}{0.35\textwidth}
\begin{center}
 \includegraphics[width=0.4\textwidth]{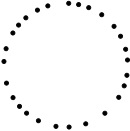}
 \captionof{figure}{Sample of 27 points \\ from an unknown plane curve.}
 \label{fig:circle}
\end{center}
\end{minipage}
\medskip

\noindent
Let us consider these five questions for the dataset with $m=27$ and $n=2$
shown in Figure~\ref{fig:circle}.
Here the answers are easy to see,
but what to do if $n \geq 4$ and no picture is available?
\begin{enumerate}
\item The dimension of the unknown variety $V$ is one.
\item The ideal of $V$ is generated by one polynomial of the form
$(x-\alpha)^2 + (y-\beta)^2 - \gamma$.
\item The degree of $V$ is two. A generic line meets $V$ in two (possibly complex) points.
\item The circle $V$ is irreducible because it admits a  parametrization by rational functions.
\item The homology groups  are $\,H_0(V,\ZZ) = H_1(V,\ZZ) =  \ZZ^1\,$ and
$\,H_i(V,\ZZ) = 0 $ for $i \geq 2$.
\end{enumerate}

There is a considerable body of literature on such questions in
statistics and computer science. The general
context is known as {\em manifold learning}.
One often assumes that $V$ is smooth, i.e.~a manifold, in order
to apply local methods based on approximation by tangent spaces.
Learning the true nature of the manifold $V$ is not a concern for most authors.
Their principal
aim is {\em dimensionality reduction}, and $V$ only serves
in an auxiliary role. Manifolds act as a scaffolding to frame question 1.
This makes sense when  the parameters  $m$ and $n$  are  large.
Nevertheless, the existing literature often draws its inspiration from
figures in $3$-space with many well-spaced sample points.
For instance, the textbook by Lee and Verleysen~\cite{LV}
employs the ``Swiss roll'' and the ``open box'' for its running examples
(cf.~\cite[\S 1.5]{LV}).

 One notable exception is the work by Ma {\it et al.} \cite{derksen}.
 Their {\em Generalized Principal Component Analysis}
 solves  problems 1-4 under the assumption that $V$ is a finite
 union of linear subspaces.
 Question 5 falls under the umbrella
 of {\em topological data analysis} (TDA). Foundational work by
 Niyogi,  Smale and Weinberger \cite{NSW} concerns the
 number $m$ of samples needed to compute the
 homology groups of $V$, provided $V$ is smooth and
 its {\em reach} is known.

The perspective of this paper is that of {\em computational
algebraic geometry}. We care deeply about the unknown
variety $V$. Our motivation is the riddle: {\em what is $V$?}
For instance, we may be given $m=800$ samples in $\RR^9$,
drawn secretly from the group ${\rm SO}(3)$ of $3 {\times} 3$ rotation matrices.
Our goal is to learn the true dimension, which is three,
to find the $20$ quadratic polynomials that vanish on $V$, and
to conclude with the guess that $V$ equals ${\rm SO}(3)$.

Our article is organized as follows.
Section \ref{sec:vardata}  presents basics of algebraic geometry
from a data perspective.
Building on \cite{CLO},
we explain some relevant concepts and
offer a catalogue of varieties $V$ frequently seen in applications.
This includes our three running examples: the Trott curve,
the rotation group ${\rm SO}(3)$, and varieties of
low rank matrices.

Section \ref{sec:dimension} addresses the
problem of estimating the dimension of $V$
from the sample $\Omega$. We study nonlinear PCA, box counting dimension, persistent homology curve dimension, correlation dimension
and the methods of Levina-Bickel \cite{LB} and Diaz-Quiroz-Velasco \cite{DQV}. Each of these notions depends on
a parameter $\epsilon$ between $0$ and $1$. This determines the scale from local to global at which we consider $\Omega$. Our empirical dimensions are functions of $\epsilon$.
We aggregate their graphs in the {\em dimension diagram} of $\Omega$,
as seen in Figure \ref{DD}.

Section \ref{sec:homology}
 links algebraic geometry to topological data analysis. To
learn homological information about $V$  from $\Omega$, one wishes to know the
{\em reach} of the variety $V$. This algebraic number is used
to assess the quality of a sample \cite{AKCMRW, NSW}.
We propose a variant of persistent homology that incorporates information about
the tangent spaces of $V$ at points in $\Omega$.

A key feature of our setting is the existence of
polynomials that vanish on the model~$V$, extracted from polynomials
that vanish on the sample $\Omega$. Linear polynomials are found by
Principal Component Analysis (PCA).  However, many relevant varieties $V$
are defined by quadratic or cubic equations. Section
\ref{sec:finding_equations} concerns the
computation of these polynomials.

Section \ref{sec:using}
utilizes the polynomials found in Section \ref{sec:finding_equations}.
These cut out a variety~$V'$ that contains $V$. We do not know whether
$V' = V$ holds, but we would like to test this and certify it,
using both numerical and symbolic algorithms.
The geography of $\Omega$ inside $V'$ is studied by computing
dimension, degree, irreducible decomposition,
real degree, and volume.

Section \ref{sec:experiments_and_data} introduces our software package
 \texttt{LearningAlgebraicVarieties}. This is written in
 \texttt{Julia} \cite{bezanson2017julia}, and implements all algorithms described in
 this paper. It is  available~at
 $$ \hbox{\url{https://github.com/PBrdng/LearningAlgebraicVarieties.git}}. $$
 To compute persistent homology, we use
 Henselman's package~\texttt{Eirene}~\cite{Eirene}. For
  numerical algebraic geometry we use \texttt{Bertini} \cite{BertiniBook} and \texttt{HomotopyContinuation.jl}~\cite{BT}.
We conclude with a detailed case study for the dataset in
\cite[\S 6.3]{JavaplexTutorial}. Here, $\Omega$
consists of $6040$ points in~$\RR^{24}$, representing
conformations of the molecule cyclo-octane $C_8 H_{16}$,
shown in Figure \ref{cyclo_pic}.

Due to space limitations, many important aspects of learning varieties from samples
are not addressed in this article. One is the issue of noise. Clearly, already the slightest noise in one of the points in Figure~\ref{fig:circle} will let no equation of the form $(x-\alpha)^2 + (y-\beta)^2 - \gamma$ vanish on $\Omega$. But some will \emph{almost} vanish, and these are the equations we are looking for. Based on our experiments, the methods we present for answering questions 1-5 can handle data that is approximate to some extent. However, we leave a qualitative stability analysis for future work. We also assume that there are no outliers in our data. Another aspect of learning varieties is optimization.
 We might be interested in minimizing a polynomial function $f$
 over the unknown variety $V$ by only looking at the samples in $\Omega$.
 This problem was studied by Cifuentes and Parrilo in~\cite{CP},
 using the sum of squares (SOS) paradigm \cite{BPT}.

\section{Varieties and Data}\label{sec:vardata}

The mathematics of data science is concerned with finding
 low-dimensional needles in  high-dimensional haystacks. The needle is the
 model which harbors the actual data, whereas the haystack is some ambient space.
The paradigms of models are the $d$-dimensional linear subspaces $V$ of
$\RR^n$, where $d$ is small and $n$ is large. Most of the points in $\RR^n$ are very far from
any sample $\Omega$ one might ever draw from $V$, even in the presence of noise and outliers.

The data scientist seeks to learn the unknown model $V$ from the
sample $\Omega$ that is available. If $V$ is suspected to be a
linear space, then she uses linear algebra. The first tool that comes to mind is
Principal Component Analysis (PCA). Numerical algorithms for linear algebra
are well-developed and fast. They are at the heart of scientific computing
and its numerous applications. However, many models $V$ occurring in
science and engineering are not linear spaces. Attempts to
replace $V$ with a linear approximation are likely to fail.

This is the point where new mathematics comes in.
Many branches of mathematics can help with
the needles of data science. One can think of $V$ as a topological space, a differential manifold,
a metric space, a Lie group, a hypergraph, a category, a semi-algebraic set,
and lots of other things. All of these structures
are useful in representing and analyzing models.

In this article we  focus on the constraints
that describe $V$ inside the ambient  $\RR^n$ (or~$\mathbb{P}_\mathbb{R}^{n-1}$).
The paradigm says that these are linear equations,
revealed numerically by feeding $\Omega$ to~PCA.
But, if the constraints are not all linear, then
we look for equations of higher degree.

\subsection{Algebraic Geometry Basics}

Our models $V$ are algebraic varieties over the field $\RR$ of real numbers.
A {\em variety} is the set of common zeros of a system of
polynomials in $n$ variables. A priori,  a variety lives in
{\em Euclidean space} $\RR^n$.
In many applications two points are identified if they
agree up to scaling. In such cases, one replaces $\RR^n$ with
the {\em real projective space} $\PP^{n-1}_\RR$, whose points
are lines through the origin in $\RR^n$. The resulting
 model $V$ is a {\em real projective variety},
defined by homogeneous polynomials
in $n$ unknowns. In this article, we use the term {\em variety}
to mean any zero set of polynomials in
$\RR^n$ or $\PP^{n-1}_\RR$.
The following three varieties serve as our running examples.

\begin{example}[Trott Curve] \label{ex:trott}
The Trott curve is the plane curve of degree four defined by
 \begin{equation}
 \label{eq:trotteqn} 12^2(x^4+y^4)\,-\,15^2(x^2+y^2)\,+\,350x^2y^2\,+\,81\,\,\,=\,\,\,0.
\end{equation}
 This curve is compact in $\RR^2$  and
 has four connected components (see Figure \ref{fig:trott2}).
 The equation of the corresponding projective curve is obtained
 by homogenizing the polynomial (\ref{eq:trotteqn}).
 The curve is nonsingular.
The Trott curve is quite special because all of its bitangent lines
are all fully real. Pl\"ucker showed in 1839 that every plane quartic
has $28$ complex bitangents,  Zeuthen argued in 1873
that the number of real bitangents is $28$, $16$, $8$ or $4$;
see \cite[Table~1]{PSV}.

\end{example}

\begin{example}[Rotation Matrices] \label{ex:rotation}
The group ${\rm SO}(3)$ consists of all
$3 {\times} 3$-matrices $X= (x_{ij})$ with
${\rm det}(X) = 1$ and  $X^T X  = {\rm Id}_3$. The last constraint
translates into $9$ quadratic equations:
{\footnotesize
\begin{equation}
\label{eq:nineSO3}
 \begin{matrix} x_{11}^2+x_{21}^2+x_{31}^2-1 & &
x_{11} x_{12}+x_{21} x_{22}+x_{31} x_{32}  && x_{11} x_{13}+x_{21} x_{23}+x_{31}x_{33} \\
     x_{11} x_{12} +x_{21} x_{22} + x_{31} x_{32} &&
      x_{12}^2+x_{22}^2+x_{32}^2-1 && x_{12} x_{13} + x_{22} x_{23} + x_{32} x_{33} \\
     x_{11} x_{13} +x_{21} x_{23} + x_{31} x_{33}   &&
     x_{12} x_{13} +x_{22} x_{23} +x_{32} x_{33} & & x_{13}^2 + x_{23}^2 + x_{33} ^2-1
     \end{matrix}
\end{equation}}
These quadrics say that $X$ is
an orthogonal matrix. Adding the cubic  ${\rm det}(X)-1$
gives $10$ polynomials that
define ${\rm SO}(3)$ as a variety in $\RR^9$. Their ideal $I$ is prime. In total, there are $20$ linearly independent
quadrics in $I$: the nine listed in  (\ref{eq:nineSO3}),
two from the diagonal of  $XX^T-{\rm Id}_3$, and nine
that express  the right-hand rule for orientation, like
  $ x_{22} x_{33} - x_{23} x_{32} - x_{11}$.
   \end{example}

\begin{example}[Low Rank Matrices] \label{ex:rankone}
Consider the set of $m \times n$-matrices of rank $\leq r$.
This is the zero set of $\binom{m}{r+1} \binom{n}{r+1}$
polynomials, namely the $(r+1)$-minors.
These equations are homogeneous of degree $r+1$. Hence this variety  lives
naturally in the projective space $\PP^{mn-1}_\RR$.
\end{example}

A variety $V$ is {\em irreducible} if it is not a union of two proper subvarieties.
The above varieties are irreducible.
A sufficient condition for a variety to be irreducible is that it has
a parametrization by rational functions. This holds
in Example \ref{ex:rankone} where $V$ consists of
 the matrices  $U_1^T U_2$ where $U_1$ and $U_2$ have $r$ rows.
It also holds for the rotation matrices
\begin{equation}
{\footnotesize
\label{eq:SO3para}
 X \,\,=\,\, \frac{1}{1{-}a^2{-}b^2{-}c^2{-}d^2} \begin{pmatrix}
1{-}2b^{2}{-}2c^{2} & 2ab-2cd & 2ac+2bd \\
2ab+2cd & 1{-}2a^{2}{-}2c^{2} & 2bc-2ad \\
2ac-2bd & 2bc+2ad & 1{-}2a^{2}{-}2b^{2} \end{pmatrix}.}
\end{equation}
 However, smooth quartic
curves in $\PP^2_\RR$ admit no such rational parametrization.

The two most basic invariants of a variety $V$ are its
{\em dimension} and its {\em degree}.
The former is the length $d$ of the
 longest proper chain of irreducible varieties
$V_1 \subset V_2 \subset \cdots \subset V_d \subset V$.
A general system of $d$ linear equations has
a finite number of solutions on $V$. That number is well-defined if we work over $\CC$. It
is the degree of $V$, denoted ${\rm deg}(V)$.
The Trott curve has dimension $1$ and degree $4$.
The group ${\rm SO}(3)$  has dimension~$3$ and degree $8$.
In Example \ref{ex:rankone}, if $m=3,n=4$ and $r=2$, then the projective variety has dimension $9$ and degree $6$.

There are several alternative definitions of dimension and degree in algebraic geometry.
For instance, they are read off from the Hilbert polynomial,
which can be computed by way of Gr\"obner bases.
We refer to Chapter~9, titled {\em Dimension Theory}, in the textbook \cite{CLO}.

A variety that admits a rational parametrization is called {\em unirational}.
Smooth plane curves of degree $\geq 3$ are not unirational.
However, the varieties $V$ that arise in applications are often unirational.
The reason is that $V$ often models a generative process.
This happens in statistics, where $V$ represents some kind of
(conditional) independence structure. Examples include graphical models,
hidden Markov models and phylogenetic models.

If $V$ is a unirational variety with given rational parametrization,
then it is easy to create a finite subset $\Omega$  of $V$. One
selects parameter values at random and plugs these into the
parametrization.
For instance, one creates
rank one matrices by simply multiplying a random column vector
with a random row vector. A naive approach to sampling
from the rotation group ${\rm SO}(3)$
is plugging four random real numbers $a,b,c,d$  into the
 parametrization (\ref{eq:SO3para}).
Another  method for sampling from ${\rm SO}(3)$
will be discussed in Section  \ref{sec:experiments_and_data}.

Given a dataset $\Omega \subset \RR^n$ that comes from an
applied context, it is reasonable to surmise that the underlying
unknown variety $V$ admits a rational parametrization.
However, from the vantage point of a pure geometer,
such unirational varieties are rare. To sample from a general variety $V$,
we start from its defining equations, and we solve
${\rm dim}(V)$ many linear equations on $V$.
The algebraic complexity of carrying this out is measured by ${\rm deg}(V)$.
See Dufresne {\it et al.}~\cite{DEHH} for
recent work on sampling  by way of numerical algebraic geometry.

\begin{example} \label{ex:trottsample}
One might sample from the Trott curve $V$ in Example~\ref{ex:trott}
by intersecting it with a random line. Algebraically, one solves
$\,{\rm dim}(V)=1\,$ linear equation on the curve. That line intersects $V$ in
$\,{\rm deg}(V) = 4\,$ points. Computing the intersection points can be done numerically, but also symbolically by using Cardano's formula for the quartic.
In either case, the coordinates computed by these methods may be complex numbers.
Such points are simply discarded if real samples are desired.
This can be a rather wasteful process.

At this point, optimization and real algebraic geometry enter the scene.
Suppose that upper and lower bounds are known for
the values of a linear function $\ell$ on $V$. In that case, the equations
to solve have the form $\ell(x) = \alpha$, where $\alpha$ is chosen  between these  bounds.

For the Trott curve, we might know that no real points exist unless $|x| \leq 1$.
We choose $x$ at random between $-1$ and $+1$, plug it into the
equation (\ref{eq:trotteqn}), and then solve the resulting quartic in $y$.
The solutions $y$ thus obtained are likely to be real, thus giving us lots
of real samples on the curve. Of course, for arbitrary real varieties,
it is a hard problem to identify a priori constraints that play the role
of $|x| \leq 1$. However, recent advances in polynomial optimization,
notably in sum-of-squares programming \cite{BPT}, should be quite~helpful.
\end{example}

At this point, let us recap and focus on a concrete instance of the riddles we seek to solve.

\begin{example} \label{ex:toblerone}
\rm
Let $n=6$, $m=40$ and consider the following forty sample points in $\RR^6$:
$$ \begin{tiny} \begin{matrix}
(0, -2, 6, 0, -1, 12) &
(-4, 5, -15, -12, -5, 15) \! &
(-4, 2, -3, 2, 6, -1) \! &
(0, 0, -1, -6, 0, 4)  \\
\! \! (12, 3, -8, 8, -12, 2) &
\!\!\!\! (20, 24, -30, -25, 24, -30) \! \!\!&
(9, 3, 5, 3, 15, 1) &
\! \! (12, 9, -25, 20, -15, 15) \\
(0, -10, -12, 0, 8, 15) &
(15, -6, -4, 5, -12, -2) &
(3, 2, 6, 6, 3, 4) &
(12, -8, 9, 9, 12, -6) \\
(2, -10, 15, -5, -6, 25) &
(5, -5, 0, -3, 0, 3) &
(-12, 18, 6, -8, 9, 12) &
\! (12, 10, -12, -18, 8, -15) \\
(1, 0, -4, -2, 2, 0) &
(4, -5, 0, 0, -3, 0) &
(12, -2, 1, 6, 2, -1) &
(-5, 0, -2, 5, 2, 0) \\
(3, -2, -8, -6, 4, 4) &
(-3, -1, -9, -9, -3, -3) &
(0, 1, -2, 0, 1, -2) &
\!\! (5, 6, 8, 10, 4, 12) \\
(2, 0, -1, -1, 2, 0) &
(12, -9, -1, 4, -3, -3) &
\!\!\! (5, -6, 16, -20, -4, 24) \!\! &
(0, 0, 1, -3, 0, 1) \\
\! (15, -10, -12, 12, -15, -8) \!\!\! \! &
(15, -5, 6, 6, 15, -2) &
(-2, 1, 6, -12, 1, 6) &
(3, 2, 0, 0, -2, 0) \\
(24, -20, -6, -18, 8, 15) &
(-3, 3, -1, -3, -1, 3) &
(-10, 0, 6, -12, 5, 0) &
(2, -2, 10, 5, 4, -5) \\
(4, -6, 1, -2, -2, 3) &
(3, -5, -6, 3, -6, -5) &
(0, 0, -2, 3, 0, 1) &
\! (-6, -4, -30, 15, 12, 10)
\end{matrix}
\end{tiny}
$$
Where do these samples come from?
Do the zero entries or the sign patterns offer any clue?

\smallskip

To reveal the answer we label the coordinates as
$(x_{22},x_{21},x_{13},x_{12},x_{23},x_{11})$. The relations
$$\,x_{11} x_{22} - x_{12} x_{21} \,=\,
    x_{11} x_{23} - x_{13} x_{21} \,=\,
    x_{12} x_{23} - x_{22} x_{13} \,=\, 0 $$
hold for all $40$ data points. Hence $V$ is the variety
of $2 \times 3$-matrices~$(x_{ij})$ of rank $\leq 1$.
Following Example \ref{ex:rankone}, we view this
as a projective variety in $\PP^5_\RR$.
In that ambient projective space, the determinantal
 variety $V$ is a manifold of dimension $3$ and degree $3$.
Note that $V$ is homeomorphic to $\PP^1_\RR \times \PP^2_\RR$,
so we can write its homology groups using the K\"unneth formula.
\end{example}

In data analysis, proximity between sample points
plays a crucial role. There are many ways
to measure distances. In this paper we restrict
ourselves to two metrics. For data in~$\RR^n$ we use the Euclidean metric, which is induced
by the standard inner product ${\langle u,v \rangle = \sum_{i=1}^n u_i v_i }$.
For data in $\PP^{n-1}_\RR$
we use the Fubini-Study metric. Points $u$ and $v$
in~$\PP^{n-1}_\RR$ are represented by their homogeneous
coordinate vectors. The {\em Fubini-Study distance}
from $u$ to $v$  is the angle between the lines spanned
by representative vectors $u$ and $v$ in $\RR^n$:
\begin{equation}
\label{eq:FS}
 \mathrm{dist}_\mathrm{FS}(u, v) \,\, = \,\,
\arccos \frac{\vert \langle u,v \rangle\vert}{\Vert u\Vert \Vert v\Vert}.
\end{equation}
This formula defines the unique  Riemannian metric on $\PP^{n-1}_\RR$
that is  orthogonally invariant.

\subsection{A Variety of Varieties}
\label{subsec:variety}

In what follows we present some ``model organisms''
seen in applied algebraic geometry.
Familiarity with a repertoire of interesting varieties
is an essential prerequisite for those who are serious about
learning algebraic structure from the datasets $\Omega$ they might encounter.

\medskip \noindent {\bf Rank Constraints.}
Consider $m \times n$-matrices with linear entries having rank $\leq r$.
We saw the  $r=1$ case in Example \ref{ex:rankone}.
A {\em rank variety} is the set of all tensors of fixed size and rank
that satisfy some linear constraints.
The constraints often take the simple form that two entries are equal.
This includes symmetric matrices, Hankel matrices, Toeplitz matrices,
Sylvester matrices, etc. Many classes of structured matrices
generalize naturally to tensors.

\begin{example} \label{ex:pfaffians} \rm
Let $n = \binom{s}{2}$ and identify $\RR^n$ with the space
of skew-symmetric $s \times s$-matrices $P= (p_{ij}) $.
These  satisfy $P^T = - P$.
Let $V$ be the variety of  rank $2$ matrices $P$ in
$\PP^{n-1}_\RR$. A parametric representation is given by
$p_{ij} = a_i b_j - a_j b_i$, so the $p_{ij}$ are the $2 \times 2$-minors of
a $2 \times s$-matrix. The ideal of $V$ is generated by the
{\em $4 \times 4$ pfaffians} $\, p_{ij} p_{kl} - p_{ik} p_{jl} + p_{il} p_{jk}$.
These  $\binom{s}{4}$ quadrics
are also known as the {\em Pl\"ucker relations}, and
 $V$ is the {\em Grassmannian} of $2$-dimensional
linear subspaces in $\RR^s$. The $r$-{\em secants} of $V$ are represented by
 the variety of skew-symmetric matrices
of rank $\leq 2r$. Its equations are the $(2r{+}2) \times (2r{+}2)$ pfaffians of~$P$.
We refer to \cite[Lectures 6 and 9]{Har} for an introduction to these classical varieties.
\end{example}

\begin{example}
\label{ex:sym3333}
The space of $3 \times 3 \times 3 \times 3$ tensors $(x_{ijkl})_{1 \leq i,j,k,l \leq 3}$
has dimension $81$. Suppose we sample from
its subspace of symmetric tensors $m = (m_{rst})_{0 \leq r \leq s \leq t \leq 3}$.
This has dimension   $n=20$.
We use the convention
$m_{rst} = x_{ijkl}$ where $r$ is the number of indices $1$ in $(i,j,k,l)$, $s$ is the number of indices $2$, and $t$ is the number of indices $3$.
This identifies tensors $m$ with cubic polynomials $m = \sum_{i+j+k \leq 3} m_{ijk} x^i y^j z^k$,
and hence with cubic surfaces in $3$-space.
Fix $r \in \{1,2,3\}$ and take $V$ to be the variety of tensors $m$
of rank $\leq r$. The equations that define the tensor rank variety $V$ are the
$(r+1) \times (r+1)$-minors of the  $4 \times 10$ {\em Hankel matrix}
$$ {\footnotesize
\begin{bmatrix}
                                                                       \, m_{000}\, & \, m_{100} & m_{010} &  m_{001} \,& \,m_{200} &
 m_{110} & m_{101} & m_{020} & m_{011} & m_{002} \, \\
                                                                         \, m_{100} \,&\, m_{200} &  m_{110}  & m_{101} \,&\, m_{300} &
 m_{210} & m_{201} & m_{120} & m_{111} & m_{102}  \, \\
                                                                          \, m_{010} \,&\, m_{110} & m_{020} & m_{011}  \,&\, m_{210} &
m_{120} & m_{111} &  m_{030} & m_{021} &  m_{012} \, \\
                                                                          \, m_{001} \,&\, m_{101} & m_{011} &  m_{002} \,&\, m_{201} &
m_{111} & m_{102} & m_{021} & m_{012} & m_{003} \, \\
\end{bmatrix}.}
$$
See Landsberg's book \cite{Lan} for an introduction to the geometry of tensors and their rank.
\end{example}

\begin{example}
\label{ex:8schoenberg}
In {\em distance geometry}, one encodes finite
metric spaces with $p$ points in the {\em Sch\"onberg matrix}
$\,D \,=\, \bigl(d_{ip}+d_{jp}-d_{ij} \bigr)\,$ where
 $d_{ij}$ is the squared distance between points~$i$ and~$j$.
 The symmetric $(p{-}1) \times (p{-}1)$ matrix $D$ is positive
semidefinite if and only if the metric space is Euclidean,
and its embedding dimension is the rank $r$ of $D$.
See \cite[\S 6.2.1]{DL} for a textbook introduction and derivation of Sch\"onberg's esults.
Hence the rank varieties of the Sch\"onberg matrix $D$ encode the
finite Euclidean metric spaces with $p$ points.
A prominent dataset corresponding to the case $p=8$ and $r=3$ will
be studied in Section \ref{sec:experiments_and_data}.
\end{example}

Matrices and tensors with rank constraints are ubiquitous in data science.
Make sure to search for such low rank structures when facing
vectorized samples, as in~Example \ref{ex:toblerone}.

\medskip \noindent  {\bf Hypersurfaces}.
The most basic varieties are defined by just one polynomial.
When given a sample $\Omega$, one might begin
by asking for  hypersurfaces that contain $\Omega$
and that are especially nice, simple and informative.
Here are some examples of special structures
worth looking~for.

\begin{example}
For $s=6, r=2$  in  Example \ref{ex:pfaffians}, $V$ is the hypersurface of the {\em $6 \times 6$-pfaffian}:
\begin{equation}
\label{eq:6pfaffian} {\footnotesize
  \begin{matrix} \,\, \,\, p_{16} p_{25} p_{34} - p_{15} p_{26} p_{34} - p_{16} p_{24} p_{35}
+ p_{14} p_{26} p_{35} + p_{15} p_{24} p_{36} \\
- p_{14} p_{25} p_{36}
+ p_{16} p_{23} p_{45} - p_{13} p_{26} p_{45} + p_{12} p_{36} p_{45}
- p_{15} p_{23} p_{46}  \\ + p_{13} p_{25} p_{46} - p_{12} p_{35} p_{46}
+ p_{14} p_{23} p_{56} - p_{13} p_{24} p_{56} + p_{12} p_{34} p_{56}.
\end{matrix}}
\end{equation}
The $15$ monomials correspond to the matchings of the complete graph with six vertices.
\end{example}

\begin{example}
The {\em hyperdeterminant} of format $2 \times 2 \times 2 $ is
a polynomial of degree four in $n=8$ unknowns, namely
the entries of a $2 \times 2 \times 2$-tensor
$X = (x_{ijk})$. Its expansion equals
$$ {\footnotesize \begin{matrix}
    x_{110}^2 x_{001}^2 {+} x_{100}^2 x_{011}^2 {+}x_{010}^2 x_{101}^2 {+}
    x_{000}^2 x_{111}^2   + 4 x_{000} x_{110} x_{011} x_{101}  {+}4 x_{010} x_{100} x_{001}  x_{111}
    -2x_{100} x_{110} x_{001} x_{011}  \\ -
    2 x_{010} x_{110} x_{001} x_{101}-2x_{010} x_{100} x_{011} x_{101}
    {-}2 x_{000} x_{110} x_{001} x_{111}
    {-}2 x_{000} x_{100} x_{011}  x_{111}
    {-} 2 x_{000} x_{010} x_{101} x_{111}.
\end{matrix}
} $$
This hypersurface is rational and it admits several nice parametrizations,
useful for sampling points. For instance, up to scaling, we can take the eight principal
minors of a symmetric $3 \times 3$-matrix, with
$x_{000} = 1$ as the $0 \times 0$-minor,
$x_{100},x_{010},x_{001}$ for the $1 \times 1 $-minors (i.e. diagonal entries),
$x_{110},x_{101}, x_{011}$ for the $2 \times 2$-minors,
and $x_{111} $ for the $3 \times 3$-determinant.
\end{example}

\begin{example}
Let $n=10$, with coordinates for $\RR^{10}$ given by the
off-diagonal entries of a symmetric $5 \times 5$-matrix $(x_{ij})$.
There is a unique quintic polynomial in these variables
that vanishes on symmetric $5 \times 5$-matrices of rank $\leq 2$.
This polynomial, known as the {\em pentad}, plays a historical role
in the statistical theory of {\em factor analysis} \cite[Example 4.2.8]{DSS}.
It equals
$$ {\footnotesize \begin{matrix}
  x_{14} x_{15} x_{23} x_{25} x_{34}
- x_{13} x_{15} x_{24} x_{25} x_{34}
- x_{14} x_{15} x_{23} x_{24} x_{35}
+ x_{13} x_{14} x_{24} x_{25} x_{35} \\
+ x_{12} x_{15} x_{24} x_{34} x_{35}
- x_{12} x_{14} x_{25} x_{34} x_{35}
+ x_{13} x_{15} x_{23} x_{24} x_{45}
- x_{13} x_{14} x_{23} x_{25} x_{45} \\
- x_{12} x_{15} x_{23} x_{34} x_{45}
+ x_{12} x_{13} x_{25} x_{34} x_{45}
+ x_{12} x_{14} x_{23} x_{35} x_{45}
- x_{12} x_{13} x_{24} x_{35} x_{45}.
\end{matrix}
} $$
We  can sample from the pentad using  the parametrization
$\,x_{ij} =  a_i b_j + c_i d_j \,$
for $ 1 \leq i < j \leq 5$.
\end{example}

\begin{example}
The determinant of the $ (p{-}1) \times (p{-}1) $ matrix in Example~\ref{ex:8schoenberg}
equals the squared volume of the simplex spanned by
$p$ points in $\RR^{p-1}$. If $p=3$ then we get Heron's formula
for the area of a triangle in terms of its side lengths.
 The hypersurface in $\RR^{\binom{p}{2}}$ defined by this polynomial represents
configurations of $p$ points in $\RR^{p-1}$ that are degenerate.
\end{example}

One problem with interesting hypersurfaces is that they
often have a very high degree and it would be impossible
to find that equation by our methods in Section \ref{sec:finding_equations}.
For instance, the {\em L\"uroth hypersurface} \cite{BLRS} in the space of
ternary quartics has degree $54$, and the {\em restricted Boltzmann machine} \cite{CMS}
on four binary random variables has degree $110$.
These hypersurfaces are easy to sample from,
but there is little hope to learn their equations from those samples.

\bigskip

\noindent {\bf Secret Linear Spaces.}
This refers to varieties that become linear spaces
after a simple change of coordinates.
Linear spaces $V$ are easy to recognize from samples $\Omega$ using PCA.

{\em Toric varieties} become linear spaces after taking logarithms, so they can be
learned by taking the coordinatewise logarithm of the sample points.
Formally, a toric variety is the image of a monomial map. Equivalently, it is
 an irreducible variety defined by binomials.

\begin{example} \rm
Let $n=6, m=40$ and consider the following dataset in $\RR^6$:
$$
\begin{tiny} \begin{matrix}
(91, 130, 169, 70, 91, 130)\!\! &  (4, 2, 1, 8, 4, 2) &
(6, 33, 36, 11, 12, 66) &  (24, 20, 44, 30, 66, 55) \\
(8, 5, 10, 40, 80, 50) &  (11, 11, 22, 2, 4, 4) &
(88, 24, 72, 33, 99, 27) &  (14, 77, 56, 11, 8, 44) \\
(70, 60, 45, 84, 63, 54) &  (143, 13, 78, 11, 66, 6) &
\!\! (182, 91, 156, 98, 168, 84) \! &  (21, 98, 91, 42, 39, 182) \\
(5, 12, 3, 20, 5, 12) &  (80, 24, 8, 30, 10, 3) &
(3, 5, 5, 15, 15, 25) &  (10, 10, 11, 10, 11, 11) \\
(121, 66, 88, 66, 88, 48) &  (45, 81, 63, 45, 35, 63) &
(48, 52, 12, 156, 36, 39) &  (45, 50, 60, 45, 54, 60) \\
(143, 52, 117, 44, 99, 36) &  (56, 63, 7, 72, 8, 9) &
(10, 55, 20, 11, 4, 22) & (91, 56, 7, 104, 13, 8) \\
(24, 6, 42, 4, 28, 7) & (18, 10, 18, 45, 81, 45) &
(36, 27, 117, 12, 52, 39) & (3, 2, 2, 3, 3, 2) \\
(40, 10, 35, 8, 28, 7) & \!\! (22, 10, 26, 55, 143, 65) &
(132, 36, 60, 33, 55, 15) & \!\! (98, 154, 154, 77, 77, 121) \\
(55, 20, 55, 44, 121, 44) &  (24, 30, 39, 40, 52, 65) & \!
(22, 22, 28, 121, 154, 154) &  (6, 3, 6, 4, 8, 4) \\
(77, 99, 44, 63, 28, 36) &  (30, 20, 90, 6, 27, 18) &
(1, 5, 2, 5, 2, 10) &  (26, 8, 28, 26, 91, 28) \\
\end{matrix}
\end{tiny}
$$
Replace each of these forty vectors by its coordinate-wise logarithm.
Applying PCA to the resulting vectors, we learn that our sample
comes from a $4$-dimensional
subspace of $\RR^6$. This is the row space of a $4 \times 6$-matrix
whose columns are the vertices of a regular octahedron:
$$ {\footnotesize
A \quad = \quad
\begin{pmatrix}
1 & 1 & 1 & 0 & 0 & 0 \\
1 & 0 & 0 & 1 & 1 & 0 \\
0 & 1 & 0 & 1 & 0 & 1 \\
0 & 0 & 1 & 0 & 1 & 1 \\
\end{pmatrix}. }
$$
Our original samples came from the toric variety $X_A$ associated with this matrix.
This means each sample has the form
$(ab,ac,ad,bc,bd,cd)$, where $a,b,c,d$ are positive real numbers.
\end{example}

Toric varieties are important in applications. For instance, in statistics
they correspond to {\em exponential families} for discrete random variables.
Overlap with rank varieties arises for matrices and tensors of rank $1$.
Those smallest rank varieties are known in geometry as
the {\em Segre varieties} (for arbitrary tensors) and
the {\em Veronese varieties} (for symmetric tensors).
These special varieties are toric, so they are represented by an integer matrix $A$ as above.

\begin{example} \label{ex:rls} \rm
Let $n=6$ and take $\Omega$ to be a sample of points of the form
{\footnotesize$$
\bigl(\, (2 a + b)^{-1}, (a + 2 b)^{-1}, (2 a + c)^{-1}, (a + 2 c)^{-1}, (2 b + c)^{-1}, (b + 2 c)^{-1} \,\bigr). $$}
The corresponding variety $V \subset \PP^5_\RR$ is
a {\em reciprocal linear space} $V$; see \cite{KV}.
In projective geometry, such a variety arises as
 the image of a linear space under the classical {\em Cremona transformation}.
From the sample we can learn the variety $V$
by replacing each data point by its coordinate-wise inverse.
Applying PCA to these reciprocalized data, we learn that  $V$
is a surface in $\PP_\RR^5$, cut out by ten cubics like
$\,2 x_3 x_4 x_5-x_3 x_4 x_6-2 x_3 x_5 x_6+x_4 x_5 x_6 $.
\end{example}

\bigskip

\noindent  {\bf Algebraic Statistics and Computer Vision}.
Model selection is a standard task in statistics.
The models considered in algebraic statistics  \cite{DSS}
are typically semi-algebraic sets, and it is customary to
identify them with their Zariski closures, which are algebraic varieties.

\begin{example} \rm
{\em Bayesian networks} are also known as directed graphical models.
The corresponding varieties are parametrized by
 monomial maps from products of simplices.
Here are the equations for a Bayesian network on $4$ binary random variables
 \cite[Example 3.3.11]{DSS}:
$$ {\footnotesize \begin{matrix}
(x_{0000} + x_{0001})(x_{0110} + x_{0111}) - (x_{0010}+x_{0011})(x_{0100}+x_{0101}), \\
(x_{1000} + x_{1001})(x_{1110}+x_{1111}) - (x_{1010} + x_{1011})(x_{1100}+x_{1101}), \\
x_{0000} x_{1001} - x_{0001} x_{1000}, \,
x_{0010} x_{1011} - x_{0011} x_{1010}, \,
x_{0100} x_{1101} - x_{0101} x_{1100}, \,
x_{0110} x_{1111} - x_{0111} x_{1110}.
\end{matrix}
} $$
The coordinates $x_{ijkl}$ represent the probabilities of observing
the $16$ states under this~model.
\end{example}

Computational biology is an excellent source of
statistical models with interesting geometric and
combinatorial properties. These include
hidden variable tree models for phylogenetics,
and  hidden Markov models for gene annotation
and sequence alignment.

In the social sciences and economics, statistical
models for permutations are widely used:

\begin{example} \rm
Let $n=6$ and consider the {\em Plackett-Luce model} for rankings of three items~\cite{SW}.
Each item has a model parameter $\theta_i$, and we write $x_{ijk}$
for the probability of observing the permutation $ijk$.
The model is the surface in $\PP^5_\RR$ given by the parametrization
$$  {\footnotesize
    \begin{matrix}
        x_{123} \,=\,
          \theta_2 \theta_3 (\theta_1 {+}\theta_3 ) (\theta_2 {+}\theta_3 ), &
        x_{132} \,=\,\theta_2 \theta_3 (\theta_1 {+}\theta_2) (\theta_2 {+}\theta_3), &
        x_{213} \,= \, \theta_1 \theta_3 (\theta_1 {+}\theta_3) (\theta_2 {+}\theta_3) ,\\
        x_{231} \, = \, \theta_1 \theta_3 (\theta_1 {+}\theta_2) (\theta_1 {+}\theta_3), &
        x_{312} \, = \, \theta_1 \theta_2 (\theta_1 {+}\theta_2) (\theta_2 {+}\theta_3) ,&
        x_{321} \, = \, \theta_1 \theta_2 (\theta_1 {+}\theta_2) (\theta_1 {+} \theta_3) .
      \end{matrix}
}    $$
    The prime ideal of this model is generated by three quadrics and one cubic:
$$  {\footnotesize
      \begin{matrix}
        x_{123}(x_{321} + x_{231})-x_{213}(x_{132} + x_{312})\,,\,\,
        x_{312}(x_{123} + x_{213})-x_{132}(x_{231} + x_{321}), \\
        x_{231}(x_{132} + x_{312})-x_{321}(x_{123} + x_{213}), \quad\,
        x_{123}x_{231}x_{312}-x_{132}x_{321}x_{213}.
      \end{matrix}
}          $$
\end{example}

When dealing with continuous distributions,
we can represent certain statistical models as varieties in
moment coordinates.  This applies to
Gaussians and their mixtures.

\begin{example} \rm
Consider the projective variety
in $\PP_\RR^6$ given parametrically by $m_0 = 1$ and
$$ {\footnotesize
 \begin{matrix}
m_1 & = & \lambda \mu + (1-\lambda) \nu \\
m_2 & = & \lambda (\mu^2 + \sigma^2) + (1-\lambda) (\nu^2 + \tau^2) \\
m_3 & = & \lambda (\mu^3 + 3 \mu \sigma^2) + (1-\lambda) (\nu^3 + 3 \nu \tau^2) \\
m_4 & = & \lambda (\mu^4 + 6 \mu^2 \sigma^2 + 3 \sigma^4)
          + (1-\lambda) (\nu^4 + 6 \nu^2 \tau^2  + 3 \tau^4) \\
m_5 & = & \lambda (\mu^5 + 10 \mu^3 \sigma^2 + 15 \mu \sigma^4)
          + (1-\lambda) (\nu^5 + 10 \nu^3 \tau^2  + 15 \nu \tau^4) \\
m_6 & = & \lambda (\mu^6 + 15 \mu^4 \sigma^2 + 45 \mu^2 \sigma^4 + 15 \sigma^6)
          + (1-\lambda) (\nu^6 + 15 \nu^4 \tau^2  + 45 \nu^2 \tau^4 + 15 \tau^6).
\end{matrix}
} $$
These are the moments of order $\leq 6$ of  the mixture of two
Gaussian random variables on the line. Here $\mu$ and $\nu$
are the means, $\sigma$ and $\tau$ are the variances, and $\lambda$
is the mixture parameter. It was shown in
\cite[Theorem 1]{AFS} that this is a hypersurface of degree $39$ in $\PP^6$.
For $\mu = 0$ we get the  {\em Gaussian moment surface}
 which is defined by the   $3 \times 3$-minors of the
$3 \times 6$-matrix
$$ {\footnotesize
\begin{pmatrix}
0 & m_0 & 2 m_1 & 3 m_2 & 4 m_3 & 5 m_4 \\
m_0 & m_1 & m_2 & m_3 & m_4 & m_5 \\
m_1 & m_2 & m_3 & m_4 & m_5 & m_6
\end{pmatrix}.
} $$
\end{example}

\begin{example}
Let $n=9$ and fix the space of $3 \times 3$-matrices.
An {\em essential matrix} is the product of
a rotation matrix times a skew-symmetric matrix.
In computer vision, these matrices
represent the relative position of
two calibrated cameras in $3$-space.
Their entries $x_{ij}$ serve as invariant coordinates for pairs of such cameras.
The variety of essential matrices is defined
by ten cubics. These are known as the {\em D\'emazure cubics} \cite[Example 2.2]{KKPS}.

The article \cite{KKPS} studies  camera models
in the presence  of distortion.
For example, the model described in
\cite[Example 2.3]{KKPS} concerns
{\em essential matrices plus one focal length unknown}.
This is the codimension two variety defined by the
$3 \times 3$-minors of the $3 \times 4$-matrix
$$ {\footnotesize
 \begin{pmatrix}
     \,x_{11} &  x_{12}  & x_{13} & \,\,x_{21} x_{31}+x_{22} x_{32} + x_{23} x_{33} \\
     \,x_{21} & x_{22} & x_{23}  &  -x_{11} x_{31}-x_{12} x_{32}-x_{13} x_{33} \\
     \,x_{31}  & x_{32} & x_{33} & 0     \end{pmatrix}.
}      $$
Learning such models is important for
image reconstruction in computer vision.
\end{example}

\section{Estimating the Dimension}\label{sec:dimension}

The first question one asks about a variety $V$ is
 ``What is the dimension?''. In what follows, we discuss methods for estimating
$ {\rm dim}(V)$ from the finite sample $\Omega$, taken from $V$.
We present six dimension estimates. They are motivated and justified by geometric considerations.
For a manifold, dimension is defined in terms of local charts.
This is consistent with the notion of dimension in algebraic geometry
\cite[Chapter 9]{CLO}.
The dimension estimates in this section are based on $\Omega$ alone.
Later sections will address the computation of equations that vanish on $V$. These
can be employed to find upper bounds on ${\rm dim}(V)$; see (\ref{how_to_set_s}).
In what follows, however, we do not have that information.
All we are given is the input $\Omega=\{u^{(1)},\ldots, u^{(m)}\}$.

\subsection{Dimension Diagrams}

There is an extensive literature  (see e.g.~\cite{CAMASTRA2003, CAMASTRA2016})
on computing an
{\em intrinsic dimension} of the sample $\Omega$ from a manifold $V$.
The intrinsic dimension of $\Omega$ is a positive real number that approximates the {\em Hausdorff dimension} of $V$, a quantity that measures the local dimension of a space using the distances between nearby points. It is a priori not clear that the algebraic definition of ${\rm dim}(V)$
agrees with the topological definition of  Hausdorff dimension
that is commonly used  in  manifold learning. However,
this will be true under the following natural hypotheses.
We assume that $V$ is a variety in $\RR^n$ or $\PP^{n-1}_\RR$
such that the set of real points is Zariski dense
in each irreducible component of $V$. If $V$ is irreducible,
then its singular locus ${\rm Sing}(V)$ is a proper subvariety,
so it has measure zero. The regular locus $V \backslash {\rm Sing}(V)$
is a real manifold. Each connected component is a real manifold of dimension
$\,d = {\rm dim}(V)$.

The definitions of intrinsic dimension
 can be grouped into two categories: \emph{local methods} and
 \emph{global methods}~\cite{CAMASTRA2016, JainDubes}. Definitions involving information about sample
 neighborhoods fit into the local category, while those that use the whole
 dataset are called global.

 Instead of making such a strict distinction between local and global, we introduce a parameter $0\leq\epsilon\leq 1$. The idea behind this is that $\epsilon$ should determine the range of information that is used to compute the dimension from the local scale ($\epsilon=0$) to the global scale ($\epsilon=1$).

 To be precise, for each of the dimension estimates, locality is determined by a notion of \emph{distance}:
  the point sample $\Omega$ is a finite metric space. In our context we restrict extrinsic metrics to the sample.
  For samples $\Omega\subset\mathbb{R}^n$
  we work with the {\em scaled Euclidean distance} \begin{equation}\label{Euclidean_scaled}\mathrm{dist}_{\mathbb{R}^n}(u,v) \,\,\,:=\,\,\, \frac{\Vert u - v \Vert}{\max_{x,y\in\Omega}\,\Vert x - y \Vert}.
      \end{equation}
  For samples $\Omega$ taken in projective space $\PP^{n-1}_\RR$ we use the
  {\em scaled Fubini-Study distance}
  \begin{equation}\label{FS_scaled} \,\, \mathrm{dist}_{\PP^{n-1}_\RR}(u,v)
  \,\,:=\,\, \frac{\mathrm{dist}_\mathrm{FS}(u,v)}{\max_{x,y\in\Omega} \mathrm{dist}_\mathrm{FS}(x,y)}.
      \end{equation}
Two points $u^{(i)}$ and $u^{(j)}$ in $\Omega$ are considered {\em $\epsilon$-close}
with respect to the parameter $\epsilon$
if $\mathrm{dist}_{\RR^{n}}(u,v)\leq \epsilon$ or $\mathrm{dist}_{\PP^{n-1}_\RR}(u,v)\leq \epsilon$, respectively. Given $\epsilon$ we divide the sample $\Omega$ into clusters $\Omega_1^\epsilon,\ldots,\Omega_l^\epsilon$, which are defined in terms of $\epsilon$-closeness, and apply the methods to each cluster separately, thus obtaining dimension estimates whose definition of being local depends on $\epsilon$. In particular, for $\epsilon =0$ we consider each sample point individually, while for $\epsilon = 1$ we consider the whole sample. Intermediate values of~$\epsilon$ interpolate between the two.

Many of the definitions of intrinsic dimension are consistent.
This means that it is possible to compute a scale $\epsilon$ from $\Omega$ for which the intrinsic dimension of each cluster converges to the dimension of $V$ if $m$ is sufficiently large and $\Omega$ is sampled sufficiently densely.
By contrast, our paradigm is that~$m$ is fixed. For us,~$m$ does not tend to infinity. Our standing assumption is that we are given one fixed sample~$\Omega$. The goal is to compute a meaningful
dimension from that fixed sample of $m$ points. For this reason, we cannot unreservedly employ results on appropriate parameters $\epsilon$ in our methods. The sample $\Omega$ will
almost never satisfy the assumptions  that are needed.
Our approach to circumvent this problem is to create a \emph{dimension diagram}. Such diagrams are
 shown in Figures \ref{DD}, \ref{DDSO3}, \ref{DD2times3} and \ref{dim_cyclo_plot}.

\begin{definition}
Let $\mathrm{dim}(\Omega,\epsilon)$ be one of the subsequent dimension estimates. The \emph{dimension diagram} of the sample $\Omega$ is the graph of
the function $\,(0,1]\to \mathbb{R}_{\geq 0},\, \epsilon\mapsto \mathrm{dim}(\Omega,\epsilon)$.
\end{definition}
\begin{remark} The idea of using dimension diagrams is inspired by persistent homology. Our dimension diagrams and our
persistent homology barcodes of Section \ref{sec:homology} both use
$\epsilon $ in the interval $ [0,1]$ for the horizontal axis.
This uniform scale for all samples $\Omega$ makes comparisons across different datasets easier.
\end{remark}

The true dimension of a variety is an integer. However,
we defined the dimension diagram to be the graph of a function whose range
 is a subset of the real numbers.
 The reason is that the subsequent estimates  do not return integers.  A noninteger dimension can be meaningful mathematically, such as in the case of a fractal curve which fills space densely enough that its dimension could be considered closer to $2$ than $1$.  By plotting these
diagrams, we hope to gain information about the true dimension $d$
of the variety $V$ from which $\Omega$ was sampled.

\begin{figure}[h]
\begin{center}
\includegraphics[scale=0.34]{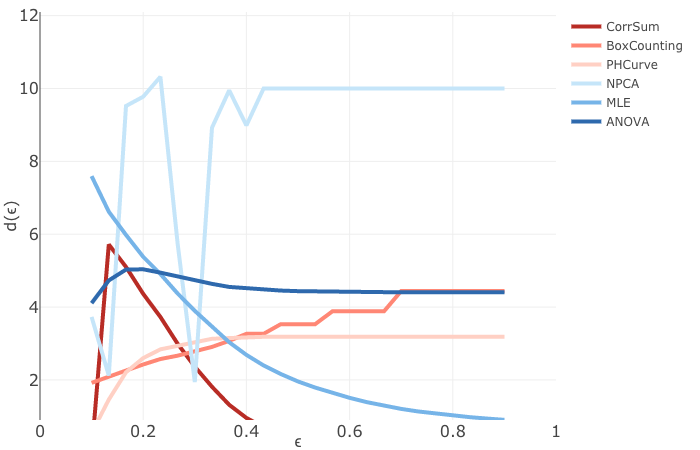}\includegraphics[scale=0.34]{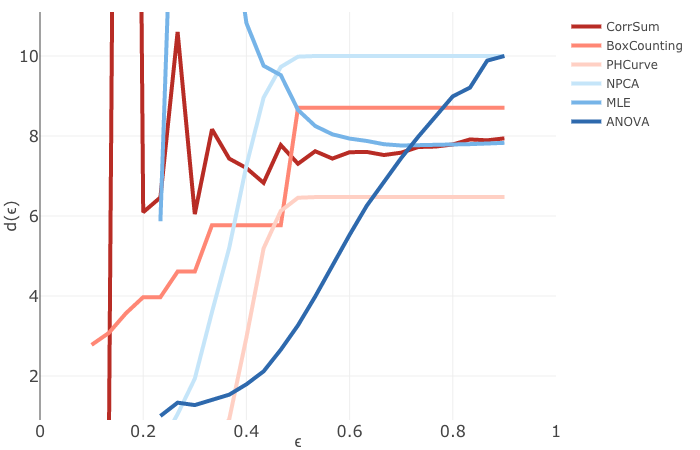}
\caption{Dimension diagrams for 600 points on the variety of $3 \times 4$
 matrices of rank $2$. This is a  projective variety of dimension $9$. Its affine cone has dimension $10$.
 The left picture shows dimension diagrams for the estimates in Euclidean space  $\RR^{12}$.
 The right picture shows those for projective space $\PP^{11}_\RR$.
 The projective diagrams yield better estimates.
The 600 data points were obtained by independently sampling pairs of $4\times 2$ and $2\times 3$ matrices, each with independent entries from the normal distribution, and then multiplying them. }\label{DD}
\end{center}
\end{figure}

One might be tempted to use the same dimension estimate for
$\RR^n$ and $\PP^{n-1}_\RR$, possibly via the Euclidean distance on an affine patch of
$\mathbb{P}_\RR^{n-1}$. However, the Theorema Egregium by Gauss implies that any projection from
$\mathbb{P}_\RR^{n-1}$ to $\RR^{n-1}$ must distort lengths. Hence, because we gave the parameter $\epsilon$ a metric meaning, we must be careful and treat real Euclidean space and real projective space separately.

Each of the curves seen in Figure~\ref{DD} is a dimension diagram. We used six different methods for estimating the dimension on a fixed sample of $600$ points.
For the horizontal axis on the left  we took the distance (\ref{Euclidean_scaled}) in $\RR^{12}$.
For the diagram on the right we took  (\ref{FS_scaled}) in $\PP^{11}_\RR$.

\subsection{Six dimension estimates}\label{sub:dimension}

In this section, we introduce  six dimension estimates.
  They are adapted from the existing literature.
 Figures \ref{DD}, \ref{DDSO3}, \ref{DD2times3} and~\ref{dim_cyclo_plot} show  dimension diagrams
generated by our implementation. Judging from those figures,
the estimators CorrSum, PHCurve, MLE and ANOVA all perform well on each of the examples. By contrast, NPCA and BoxCounting frequently overestimate the dimension. In general, we found it useful to allow for a ``majority vote" for the dimension. That is, we choose as dimension estimate the number which is closest to most of the estimators for a significant (i.e.~``persistent'') range of $\epsilon$-values in $[0,1]$.

\medskip
\noindent
{\bf NPCA Dimension}.
The gold standard of dimension estimation is
PCA.
Assuming that $V$ is a linear subspace of $\RR^n$,
we perform the following steps for the input $\Omega$.
First, we record the \emph{mean}  $\,\overline{u} := \frac{1}{m} \sum_{i=1}^m u^{(i)}$. Let $M$ be the $m \times n$-matrix with rows $u^{(i)}  - \overline{u}$.
We compute $\sigma_1\geq \cdots\geq \sigma_{\min\{m,n\}}$, the \emph{singular values} of $M$. The  {\em PCA dimension} is the number of $\sigma_i$ above a certain threshold. For instance, this threshold could be the same as in the definition of the numerical rank in (\ref{eq:numericalrank}) below.
 Following \cite[p. 30]{LV}, another idea is to set the threshold as $\sigma_k$, where ${k = {\rm argmax}_{1\leq i \leq \min\{m,n\}-1} \vert\log_{10}(\sigma_{i+1}) - \log_{10}(\sigma_{i})\vert}$. In our experiments we found that this improved the dimension estimates. In
some situations it is helpful to further divide each column of $M$ by its standard deviation.
This approach is explained in \cite[p. 26]{LV}.

Using PCA on a local scale is known as
{\em Nonlinear Principal Component Analysis} (NPCA).
Here we partition the sample $\Omega$ into $l$ clusters~$\Omega_1^\epsilon,\ldots,\Omega_l^\epsilon\subset\Omega$ depending on $\epsilon$. For each~$\Omega_i^\epsilon$ we apply the usual PCA and obtain
the estimate~${\rm dim}_{\rm pca}(\Omega_i^\epsilon)$. The idea behind this is that
the manifold $V \backslash {\rm Sing}(V)$ is approximately linear locally.
We take the average of these local dimensions, weighted by
the size of each cluster. The result is the {\em nonlinear PCA dimension}
\begin{equation}
\label{eq:npca}
 {\rm dim}_{\rm npca}(\Omega,\epsilon)\, \,:= \,\,\, \frac{1}{\sum_{i=1}^l |\Omega_i^\epsilon|}\,\,
 \sum_{i=1}^l |\Omega_i^\epsilon| \cdot  {\rm dim}_{\rm pca}(\Omega_i^\epsilon).
\end{equation}

Data scientists have many clustering methods.
For our study we use \emph{single linkage clustering}. This works as follows.
The clusters are the connected components in  the graph with vertex set $\Omega$ whose
edges are the pairs of points having distance at most $\epsilon$.
We do this either in Euclidean space with metric
(\ref{Euclidean_scaled}), or in projective space with metric (\ref{FS_scaled}).
In the latter case, the points come from the cone over the true variety $V$. To make $\Omega$ less scattered, we sample a random linear function $l$ and scale each data point $u^{(i)}$ such that $l(u^{(i)})=1$. Then we use those affine coordinates for NPCA. We chose this procedure because NPCA detects linear spaces and the proposed scaling maps projective linear spaces to affine-linear spaces.

We next introduce the notions of box counting dimension,
 persistent homology curve dimension
and correlation dimension. All three of these belong to the class of
\emph{fractal-based methods}, since they rest  on the idea of using the fractal
dimension as a proxy for ${\rm dim}(V)$.

\medskip
\noindent
{\bf Box Counting Dimension}.
Here is the geometric idea in $\mathbb{R}^2$.
Consider a square of side length~$1$ which we cover by miniature squares.  We could cover it with $4$ squares of side length $\frac{1}{2}$, or $9$ squares of side length $\frac{1}{3}$, etc. What remains constant is
the log ratio of the number of pieces over the magnification factor.
 For the square: $\frac{\log(4)}{\log(2)}=\frac{\log(9)}{\log(3)}=2$. If~$\Omega$
    only intersects $3$ out of $4$ smaller squares,
    then we estimate the dimension to be between $1$ and $2$.

In $\mathbb{R}^n$ we choose as a box the parallelopiped with lower vertex
${u^{-} = {\rm min}(u^{(1)},\ldots,u^{(m)})}$ and upper vertex
${u^{+}= {\rm max}(u^{(1)},\ldots,u^{(m)})}$, where ``min'' and ``max''
are  coordinatewise minimum and maximum. Thus the box is
$\{ x \in \RR^n\,: \, u^- \leq x \leq u^+ \}$.
For $j=1,\ldots,n$, the
interval $[u^-_j,u^+_j]$ is divided into $R(\epsilon)$ equally sized intervals, whose length depends on $\epsilon$.
A $d$-dimensional object is expected to capture $R(\epsilon)^d$ boxes.
We determine the number $\nu$
 of boxes that contain a point in $\Omega$. Then the {\em box counting dimension estimate} is
\begin{equation}
\label{eq:boxdim}
\dim_{\rm box}(\Omega,\epsilon) \,\, := \,\, \frac{{\rm log}(\nu)}{ {\rm log}(R(\epsilon))}.
\end{equation}
How to define the function $R(\epsilon)$? Since the number of small boxes is very large,
we cannot iterate through all boxes. It is desirable to decide from a data point $u\in \Omega$
 in which box it lies. To this end, we set
  $R(\epsilon) = \lfloor \frac{\lambda}{\epsilon} \rfloor + 1$, where $\lambda:= \max_{1\leq j\leq n} \vert u^+_j - u^-_j \vert$. Then, for $u\in\Omega$ and $k=1,\ldots,n$ we compute the largest $q_k$ such that  $\frac{q_k}{R(\epsilon)} \vert u^+_k - u^-_k\vert\leq \vert u_k-u^-_k\vert$. The $n$  numbers $q_1,\ldots,q_n$ completely determine the box that contains the sample $u$.

For the box counting dimension in real projective space, we represent the points in $\Omega$ on an affine patch
of $\PP^{n-1}_\RR$. On this patch we do the same construction as above, the only exception being that ``equally sized intervals'' is measured in terms of scaled Fubini-Study~distance (\ref{FS_scaled}).

\medskip
\noindent
{\bf  Persistent Homology Curve Dimension}.
The underlying idea was proposed by the Pattern Analysis Lab at Colorado State University~\cite{PALCSU}.
First we partition $\Omega$ into $l$ clusters~$\Omega_1^\epsilon,\ldots,\Omega_l^\epsilon$ using single linkage clustering with $\epsilon$. On each subsample $\Omega_i$ we construct a minimal spanning tree. Suppose that the cluster $\Omega_i$ has $m_i$ points. Let  $l_i(j)$ be the length of the $j$-th longest edge in a minimal spanning tree for $\Omega_i$.
For each $\Omega_i$ we compute
\[
\dim_{\rm PHcurve}(\Omega_i,\epsilon) = \left\vert \frac{\log(m_i)}{\log(\frac{1}{m_i-1}\sum_{j=1}^{m_i-1}l_i(j))  } \right\vert.
\]
The {\em persistent homology curve dimension estimate} $\,\dim_{\rm PHCurve}(\Omega,\epsilon)\,$
is the average of the local dimensions, weighted by
the size of each cluster:
 \begin{equation*}
\dim_{\rm PHcurve}(\Omega,\epsilon) \,\, := \,\,
\frac{1}{\sum_{i=1}^l \vert \Omega_i^\epsilon\vert}\sum_{i=1}^m \vert\Omega_i\vert\dim_{\rm PHcurve}(\Omega_i,\epsilon) . \end{equation*}
In the clustering step we take the distance~(\ref{Euclidean_scaled})
if the variety is affine and  (\ref{FS_scaled}) if it is projective.

\medskip
\noindent
{\bf  Correlation Dimension}.
This is motivated as follows.
Suppose that $\Omega$ is uniformly distributed in the unit ball.
 For pairs $u,v\in\Omega$, we have $\mathrm{Prob}\{\mathrm{dist}_{\mathbb{R}^n}(u,v) < \epsilon\} = \epsilon^d$, where $d = \mathrm{dim}(V)$.
  We set  $C(\epsilon) := (1/\tbinom{m}{2})\cdot\sum_{1\leq i<j\leq m} \mathbf{1}(\mathrm{dist}_{\mathbb{R}^n}(u^{(i)},u^{(j)}) < \epsilon)$ ,
 where $\mathbf{1}$ is the indicator function.
Since we expect
the empirical distribution $C(\epsilon)$ to be approximately $\epsilon^d$,
 this suggests using
  $\,\frac{\log(C(\epsilon))}{ {\rm log}(\epsilon)}\,$ as  dimension estimate.
In \cite[\S 3.2.6]{LV} it is mentioned that a more practical estimate is obtained from $C(\epsilon)$ by selecting some small $h>0$ and putting
\begin{equation} \label{eq:cordim} \qquad  \quad \dim_{\rm cor}(\Omega,\epsilon)\,\, := \,\,
\left\vert \frac{\log C(\epsilon) - \log C(\epsilon + h)}{\log(\epsilon) - \log(\epsilon + h)}\right\vert.
\end{equation}
In practice, we compute the dimension estimates for a finite subset of parameters
$\epsilon_1,\ldots, \epsilon_k$ and  put $h = \min_{i\neq j} \vert \epsilon_i -\epsilon _j\vert$.
The ball in $\PP^{n-1}_\RR$ defined by the scaled Fubini-Study distance (\ref{FS_scaled}) is a
  spherical cap of radius~$\epsilon$. Its volume relative to a cap of radius 1 is $\int_0^\epsilon (\sin \alpha)^{d-1} \mathrm{d}\alpha/\int_0^1 (\sin \alpha)^{d-1} \mathrm{d}\alpha$, which we approximate by
  $\,\bigl(\frac{\sin(\epsilon)}{\sin(1)} \bigr)^{d}$. Hence, the {\em projective correlation dimension estimate} is
  \begin{equation*}
\dim_{\rm cor}(\Omega,\epsilon) \,\, := \,\, \left\vert \frac{\log  C(\epsilon) - \log C(\epsilon + h)}{\log(\sin(\epsilon)) - \log(\sin(\epsilon + h))}\right\vert, \qquad \qquad \qquad
\end{equation*}
with the same $h$ as above and where $ C(\epsilon)$ is now computed using the Fubini-Study distance.

\smallskip

We next describe two more methods. They differ from the aforementioned in
that they derive from estimating the dimension of the variety~$V$ locally at a \emph{distinguished point} $u^{(\star)}$.

\medskip
\noindent
{\bf MLE Dimension}.
Levina and Bickel \cite{LB} introduced a maximum likelihood estimator for the dimension of
an unknown variety
$V$. Their estimate is derived for samples in Euclidean space $\mathbb{R}^n$.
Let $k $ be the number of samples $u^{(j)}$
in $\Omega$ that are within distance~$\epsilon$ to~$u^{(\star)}$.
We write $T_i(u^{(\star)})$ for the distance from $u^{(\star)}$ to its $i$-th nearest neighbor  in $\Omega$. Note that $T_k(u^{(\star)}) \leq \epsilon < T_{k+1}(u^{(\star)})$.
The {\em Levina-Bickel formula} around the point $u^{(\star)}$ is
\begin{equation}\label{BL_estimator}
\dim_{\rm MLE}(\Omega,\epsilon,u^{(\star)}) \,\, := \,\,
\left( \frac{1}{k} \sum_{i=1}^k \log \frac{\epsilon}{T_i(u^{(\star)})} \right)^{-1}.
 \end{equation}
 This expression is derived from the hypothesis that
 $k = k(\epsilon)$ obeys a Poisson process on the  $\epsilon$-neighborhood ${\{u\in\Omega : \mathrm{dist}_{\mathbb{R}^n}(u,u^{(\star)}) \leq \epsilon\}}$, in which $u$ is uniformly distributed.
 The formula~(\ref{BL_estimator}) is obtained by solving the likelihood equations
 for this Poisson process.

 In projective space, we model $k(\epsilon)$ as a Poisson process on $\{u\in\Omega : \mathrm{dist}_{\mathbb{P}_\mathbb{R}^{n-1}}(u,u^{(\star)}) \leq \epsilon\}$. However, instead of assuming that $u$ is uniformly distributed in that neighborhood, we assume that the orthogonal projection of $u$ onto the tangent space~$\mathrm{T}_{u^{(\star)}} \mathbb{P}_\mathbb{R}^{n-1}$ is uniformly distributed in the associated ball of radius $\sin{\epsilon}$. Then, we derive the formula
$$
\dim_{\rm MLE}(\Omega,\epsilon,u^{(\star)}) \,\, := \,\,
\left( \frac{1}{k} \sum_{i=1}^k \log \frac{\sin(\epsilon)}{\sin(\widehat{T}_i(u^{(\star)}))} \right)^{-1},
$$
 where $\widehat{T}_i(u^{(\star)})$ is the distance from $u^{(\star)}$ to its $i$-th nearest neighbor
  in $\Omega$ measured for (\ref{FS_scaled}).

It is  not clear how to choose $u^{(\star)}$ from the given $\Omega$.
We chose the following method. Fix the sample neighborhood
$\,\Omega_i^\epsilon:= \{u\in\Omega : \mathrm{dist}_{\mathbb{R}^n}(u,u^{(i)}) \leq \epsilon\}$. For each $i$ we     evaluate the formula (\ref{BL_estimator})   for $\Omega_i^\epsilon$ with distinguished point $u^{(i)}$. With this, the {\em MLE dimension estimate} is
 \begin{equation*}
 \dim_{\rm MLE}(\Omega,\epsilon) \,\,:= \,\,
 \frac{1}{\sum_{i=1}^m \vert \Omega_i^\epsilon\vert}\sum_{i=1}^m \vert\Omega_i^\epsilon \vert \cdot
 \dim_{\rm MLE} (\Omega_i^\epsilon,\epsilon, u^{(i)}).
 \end{equation*}

\medskip
\noindent
{\bf ANOVA Dimension}.
 Diaz, Quiroz and Velasco \cite{DQV} derived an
analysis of variance estimate for the dimension of $V$.
In their approach, the following expressions are important:
\begin{equation}\label{formula_for_the_betas}
    \beta_{2s-1}  \,=\,\frac{\pi^2}{4} - 2\sum_{j=0}^s\frac{1}{(2j+1)^2} \quad \text{and} \quad
    \beta_{2s}  \,=\,\frac{\pi^2}{12} - 2\sum_{j=0}^s\frac{1}{(2j)^2} \qquad \text{for}  \quad
    s\in \mathbb{N}.
\end{equation}
The quantity $\beta_d$ is the variance of the random variable $\Theta_d$,
 defined as the angle between
two uniformly chosen random points on the $(d-1)$-sphere.
We again fix $\epsilon > 0$, and we relabel so that
 $u^{(1)},\ldots,u^{(k)}$ are the points in $\Omega$ with distance at most~$\epsilon$ from $u^{(\star)}$.
 Let $\theta_{ij} \in [0,\pi]$ denote the angle between
  $u^{(i)} - u^{(\star)}$ and  $u^{(j)}- u^{(\star)}$. Then, the  {\em sample covariance} of the $\theta_{ij} $ is
 \begin{equation}\label{S_statistic}
S \,\,=\,\, \frac{1}{\tbinom{k}{2}} \sum_{1\leq i<j\leq k} \left(\theta_{ij} - \frac{\pi}{2}\right)^2.
\end{equation}
The analysis in \cite{DQV} shows that, for small $\epsilon$ and $\Omega$ sampled from a $d$-dimensional
manifold, the angles $\theta_{ij}$ are approximately $\Theta_d$-distributed. Hence, $S$ is expected to be
close to $\beta_{\dim V}$. The {\em ANOVA dimension estimate}
of  $\Omega$ is the index $d$ such that
$\beta_d$ is closest to $S$:
\begin{equation}
\label{eq:mauricio}
 \dim_{\rm ANOVA}(\Omega,\epsilon, u^{(\star)}) \,\, := \,\, {\rm argmin}_d \, |  \beta_d - S | .
\end{equation}
As for the MLE estimate, we average (\ref{eq:mauricio})
 over all $u\in\Omega$ being the distinguished point.

To transfer the definition to projective space, we revisit the idea behind the ANOVA estimate. For $u$ close to $u^{(\star)}$, the secant through
$u$ and~$u^{(\star)}$ is approximately parallel to the tangent space of $V$ at $u^{(\star)}$. Hence, the unit vector $(u^{(\star)}-u)/ \Vert u^{(\star)}-u\Vert$ is close to being in the tangent space $\mathrm{T}_{u^{(\star)}}(V)$. The sphere in $\mathrm{T}_{u^{(\star)}}(V)$ has dimension $\mathrm{dim} \,V - 1$ and we know the variances of the random angles $\Theta_{d}$. To mimic this construction in $\mathbb{P}_\RR^{n-1}$ we use the angles between geodesics meeting at $u^{(\star)}$. In our implementation,
we orthogonally project $\Omega$ to the tangent space~$\mathrm{T}_{u^{(\star)}} \mathbb{P}_\RR^{n-1}$ and compute (\ref{S_statistic}) using coordinates on that space.

\smallskip

We have defined all the mathematical ingredients inherent
in our dimension diagrams. Figure~\ref{DD} now makes sense.
Our software and its
applications will be discussed in Section \ref{sec:experiments_and_data}.

\section{Persistent Homology}\label{sec:homology}

This section connects algebraic geometry and topological data analysis. It concerns the
computation and analysis of the {\em persistent homology} \cite{Carlsson} of our sample $\Omega$. Persistent homology
of~$\Omega$ contains information about the shape of the unknown variety $V$
from which~$\Omega$~originates.

\subsection{Barcodes}
Let us briefly review the idea. Given $\Omega$, we associate a simplicial complex
with each  value of a parameter $\epsilon \in [0,1]$. Just like in the case of
the dimension diagrams in the previous section, $\epsilon$ determines the scale at which we consider $\Omega$ from local ($\epsilon = 0$) to global ($\epsilon = 1$).
The complex at~$\epsilon =0$ consists of only the vertices and at $\epsilon=1$ it is the full simplex on $\Omega$.

Persistent homology identifies and keeps track of the changes in the homology of those complexes as
$\epsilon$ varies. The output is a \emph{barcode,} i.e.\ a collection of intervals.
Each interval in the barcode corresponds to a topological feature which appears at the value of a parameter given by the left hand endpoint of the interval and disappears at the value given by the right hand endpoint. These barcodes play the same role as a histogram does in summarizing the shape of the data, with long intervals corresponding to strong topological signals and short ones to noise. By plotting the intervals we obtain a barcode, such as the one in Figure \ref{fig:trott2}.

\begin{figure}[h]
  \begin{center}
   \includegraphics[width=0.35\textwidth]{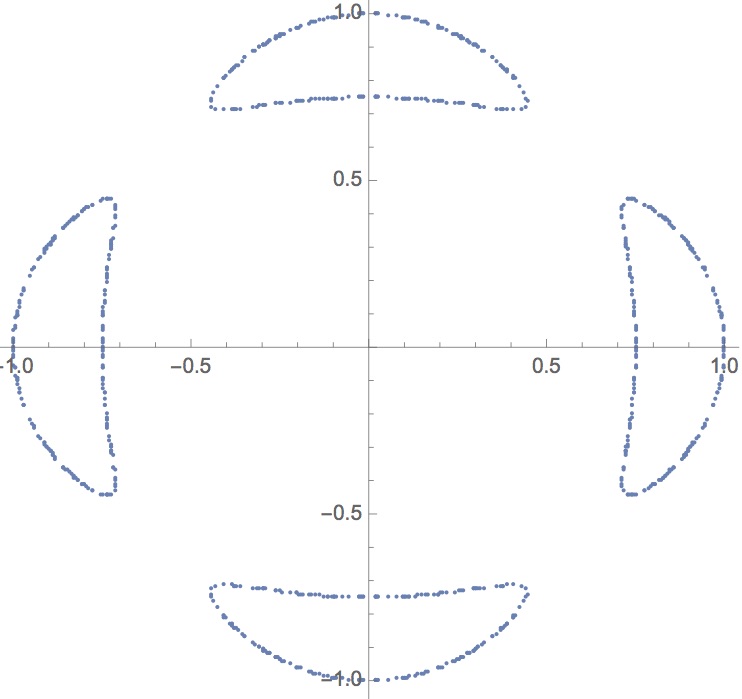} \hspace{1cm}  \includegraphics[width=0.45\textwidth]{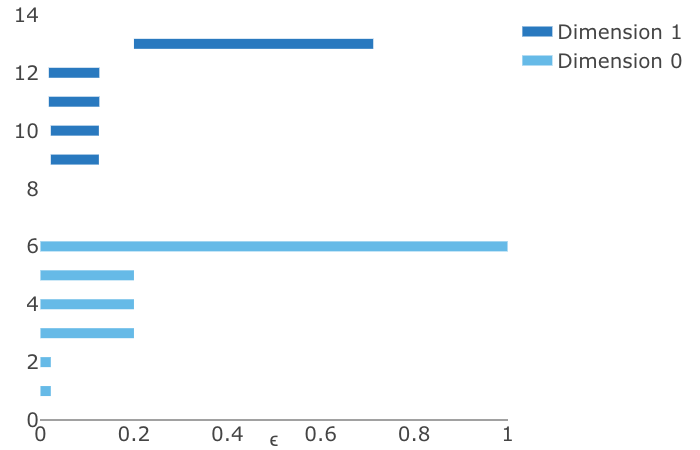}
 \end{center}
 \caption{Persistent homology barcodes for the Trott curve. }\label{fig:trott2}
   \end{figure}

 The most straightforward way
to associate a simplicial complex to $\Omega$ at $\epsilon$
is by covering~$\Omega$ with open sets $\,U(\epsilon)=\bigcup_{i=1}^m
U_i(\epsilon)$ and then building the associated {\em nerve complex}. This is the simplicial complex with vertex set
$[m] = \{1,2,\ldots,m\}$, where a subset  $\sigma$ of $ [m]$
is a face if and only if $\bigcap_{i \in \sigma} U_i(\epsilon) \not= \emptyset $.
If  all nonempty finite intersections of $U_i(\epsilon)$ are contractible topological spaces,
then the Nerve Lemma guarantees that the homology groups of $U(\epsilon)$
agree with those of its nerve complex.
When $U_i(\epsilon)$ are $\epsilon$-balls around the data points, {\it i.e.}
 \begin{equation}\label{def_balls} U_i(\epsilon) \,:= \,\{v\in \mathbb{R}^n: \mathrm{dist}_{\mathbb{R}^n}(u^{(i)},v) < \epsilon\}
 \, \text{ or } \,
 U_i(\epsilon)  \,:=\, \{v\in \mathbb{P}_\RR^{n-1}: \mathrm{dist}_{\mathbb{P}_\RR^{n-1}}(u^{(i)},v) < \epsilon\},\end{equation}
 the nerve complex is called the {\em \v{C}ech complex} at $\epsilon$.  Here
 $\mathrm{dist}_{\mathbb{R}^n}$ and $\mathrm{dist}_{\mathbb{P}_\RR^n}$ are the distances from~(\ref{Euclidean_scaled}) and (\ref{FS_scaled}), respectively.
 Theorem \ref{niyogi} gives a precise statement for a sufficient condition under which the \v{C}ech complex of $U(\epsilon)$ built on $\Omega$ yields the correct topology of $V$. However, in practice the hypotheses of the theorem will rarely be  satisfied.

\v{C}ech complexes are computationally demanding as they require storing simplices in different dimensions. For this reason, applied topologists prefer to work with the {\em Vietoris-Rips complex},
which is the flag simplicial complex determined by the edges of the \v{C}ech complex.
This means that  a subset $\sigma\subset[m]$ is a face  of the Vietoris-Rips complex
 if and only if~$\,U_i(\epsilon) \bigcap U_j(\epsilon) \not= \emptyset\,$  for all $i,j \in \sigma$.
 With the definition in  (\ref{def_balls}), the balls
 $\,U_i(\epsilon) $ and~$U_j(\epsilon) $ intersect if and only if their centers $u^{(i)}$ and~$u^{(j)}$ are less than $2 \epsilon$ apart.

Consider the sample from the Trott curve in Figure~\ref{fig:trott2}.
Following Example~\ref{ex:trottsample}, we sampled by selecting random~$x$-coordinates
  between $-1$ and $1$,  and  solving for $y$, or vice versa.
The picture on the right shows the barcode. This was computed via the Vietoris-Rips complex. For dimensions $0$ and~$1$ the six longest bars are displayed. The sixth bar in dimension $1$ is so tiny that we cannot see it. In the range where~$\epsilon$ lies between $0$ and $0.2$, we see four components.
The barcode for dimension~$1$ identifies four persisting features for $\epsilon$ between~$0.01$ and~$0.12$. Each of these indicates an oval. Once these disappear, another loop appears. This corresponds to the fact that the four ovals are arranged
to form a circle. So persistent homology picks up on both intrinsic and extrinsic
topological features of the Trott curve.

The repertoire of algebraic geometry offers a fertile testing ground
for practitioners of persistent homology. For many classes of algebraic
varieties, both over $\RR$ and $\CC$, one has a priori information
about their topology. For instance, the determinantal variety in Example~\ref{ex:toblerone}
is the $3$-manifold $\mathbb{P}_{\mathbb{R}}^1 \times \mathbb{P}_{\mathbb{R}}^2$.
Using Henselman's software {\tt Eirene} for persistent homology \cite{Eirene}, we computed
 barcodes for several samples $\Omega$ drawn
from varieties with known topology.

\subsection{Tangent Spaces and Ellipsoids}

We underscore the benefits of an algebro-geometric perspective by proposing
a variant of persistent homology that performed well in the examples we tested.
Suppose that, in addition to knowing $\Omega$ as a finite metric space,
we also have information on the  tangent spaces of the unknown variety $V$ at the points
$u^{(i)}$.   This will be the case after we have learned some
polynomial equations for $V$ using the methods in
Section \ref{sec:finding_equations}.
In such circumstances, we suggest replacing the
$\epsilon$-balls in (\ref{def_balls}) with
 \emph{ellipsoids} that are aligned to the tangent spaces.

 The motivation is that
in a variety with a bottleneck, for example in the shape of a dog bone, the balls around points on the bottleneck may intersect for $\epsilon$ smaller than that which is necessary for the full cycle to appear. When $V$ is a manifold, we
 design a covering of $\Omega$ that exploits the locally linear structure. Let $0<\lambda<1$. We take $U_i(\epsilon)$ to be an ellipsoid around $u^{(i)}$
   with principal axes of length $\epsilon$ in the tangent direction of $V$ at $u^{(i)}$ and principal axes of length $\lambda\epsilon$ in the normal direction. In this way, we allow ellipsoids to intersect with their neighbors and thus reveal the true homology of the variety before ellipsoids intersect with other ellipsoids across the medial axis.  The parameter $\lambda$ can be chosen by the user. We believe that $\lambda$ should be proportional to the \emph{reach}
   of $V$. This metric invariant is defined in the next subsection.

In practice, we perform the following procedure. Let $f = (f_1,\ldots,f_k)$ be a vector of polynomials that vanish on
$V$, derived from
the sample $\Omega\subset\mathbb{R}^n$ as in
Section \ref{sec:finding_equations}. An estimator for the tangent space $\mathrm{T}_{u^{(i)}}V$
is the kernel of the Jacobian matrix of $f$ at $u^{(i)}$. In symbols,
\begin{equation}
\label{eq:jacobiankernel}
 \widehat{\mathrm{T}}_{u^{(i)}}V \,\,:= \,\, \ker Jf(u^{(i)}).
\end{equation}
 Let  $q_i$ denote the quadratic form on $\RR^n$ that takes value $1$ on
 $\,\widehat{\mathrm{T}}_{u^{(i)}}V \cap \mathbb{S}^{n-1}\,$ and value $\lambda$ on the orthogonal complement of $\widehat{\mathrm{T}}_{u^{(i)}}V$ in the sphere $\mathbb{S}^{n-1}$.
 Then, the $q_i$ specify the ellipsoids
 $$E_i \,\, := \,\, \bigl\{\sqrt{q_i(x)} \,x \in\mathbb{R}^n\,:\, \Vert x\Vert \leq 1\bigr\}. $$
 The role of the $\epsilon$-ball  enclosing the $i$th sample point
is now played by
  $U_i(\epsilon) := u^{(i)} + \epsilon E_i$.
 These ellipsoids determine the
 covering  $\,U(\epsilon) = \bigcup_{i=1}^m U_i(\epsilon)\,$
  of the given point cloud $\Omega$. From this covering we construct
  the associated \v{C}ech  complex or Vietoris-Rips complex.

While using ellipsoids is appealing, it has practical drawbacks.
  Relating the smallest $\epsilon$ for which $U_i(\epsilon)$ and $U_j(\epsilon)$ intersect to
  $\mathrm{dist}_{\RR^n}(u^{(i)},u^{(j)})$ is not easy. For this reason we implemented the following variant of ellipsoid-driven barcodes.
   We use the simplicial complex on $[m]$ where
 \begin{equation}\label{ellipsoids_relaxation}
  \sigma \text{ is a face iff }\,
  \frac{\mathrm{dist}_{\mathbb{R}^n} (u^{(i)}, u^{(j)})}{\frac{1}{2}(\sqrt{q_i(h)}+\sqrt{q_j(h)}\,)} < 2\epsilon\,
\text{ for all } i,j \in \sigma,
  \text{ where } h = \frac{u^{(i)} - u^{(j)}}{\Vert u^{(i)} - u^{(j)}\Vert}.\end{equation}
 In (\ref{ellipsoids_relaxation}) we weigh the distance between $u^{(i)}$ and $u^{(j)}$ by the arithmetic mean of the radii of the two ellipsoids $E_i$ and $E_j$ in the direction $u^{(i)} - u^{(j)}$.
 If all quadratic forms $q_i$ were equal to $\sum_{j=1}^n x_j^2$, then
 the simplicial complex  of  (\ref{ellipsoids_relaxation}) equals the
  Vietoris-Rips complex  from (\ref{def_balls}).

 \begin{figure}[h!]
   \begin{center}
    \includegraphics[width=0.45\textwidth]{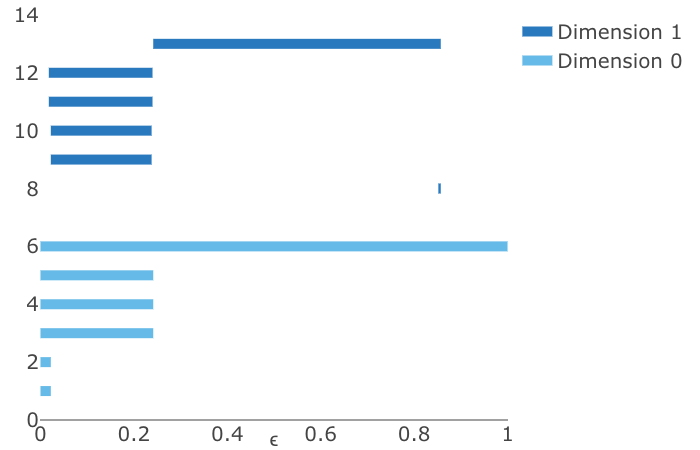}
    \hspace{1cm}
     \includegraphics[width=0.45\textwidth]{trott_curve.png}
  \end{center}
   \caption{The left picture shows the barcode constructed from the ellipsoid-driven
simplicial  complex (\ref{ellipsoids_relaxation}) with $\lambda = 0.01$,
    for the sample from the Trott curve used in~Figure~\ref{fig:trott2}.
     For comparison we display the barcode from Figure \ref{fig:trott2} in the right picture. All relevant topological features persist longer in the left plot.\label{ellipsoid_figure}}
 \end{figure}

Figure \ref{ellipsoid_figure} compares the barcodes for the classical Vietoris-Rips complex with those obtained from ellipsoids. It seems promising to further
develop variants of persistent homology that take some of the defining polynomial equations for
$(\Omega,V)$ into consideration.

\subsection{Reaching the Reach} \label{reach}

The \v{C}ech complex of a covering $U=\bigcup_{i=1}^m U_i$ has the homology of the union of balls $U$. But, can we give conditions on the sample $\Omega\subset V$ under which a  covering reveals the true homology of $V$?
A  result due to
Niyogi, Smale and Weinberger (Theorem~\ref{niyogi} below) offers an answer in
some circumstances. These involve the
concept of the \emph{reach}, which is an important metric invariant of a variety~$V$.
We here focus on varieties $V$ in the
 Euclidean space $\RR^n$.

\begin{definition}
 The {\em medial axis} of $V$ is the set
 $M_V$ of all points $u \in \RR^n$  such that the minimum
  distance from $V$ to $u$ is attained by
 two distinct points.
  The {\em reach}  $\tau(V)$
  is the infimum of all distances from points on the variety $V$ to any point in its medial axis $M_V$. In formulas: $\tau(V):=\inf_{u\in V, w \in M_V}\Vert u-w\Vert$. If $M_V = \emptyset$, we define $\tau(V) = +\infty$.
\end{definition}
Note that $\tau(V)=+\infty$, if and only if $V$ is an affine-linear subspace.
Otherwise, the reach is a non-negative real number.
In particular, there exist varieties $V$ with $\tau(V)=0$. For instance, consider the union of two lines $\,V=\{(x,y)\in\mathbb{R}^2 : xy=0\}$. All points in the diagonal $D=\{(x,y)\in\mathbb{R}^2 : x=y, x\neq 0\}$ have two
closest points on $V$. Hence, $D$ is a subset of the medial axis $M_V$, and we conclude that
  $0\leq \tau(V)\leq \inf_{u\in V, w \in D} \Vert u-w\Vert =0$. In general, any singular
  variety with an ``edge'' has zero reach.

To illustrate
the concept of the reach, let $V$ be a smooth curve in the plane, and draw
the normal line at each point of $V$. The collection of these lines
is the {\em normal bundle}. At a short distance from the curve,
the normal bundle is a product: each point $u$ near $V$
has a unique closest point $u^*$ on $V$, and $u$ lies on the
normal line through $u^*$. At a certain distance, however,
some of the normal lines cross. If $u$ is a crossing point of minimal distance to~$V$,
then $u$ has no unique closest point $u^*$ on $V$. Instead, there are at least
two points on $V$ that are closest to $u$ and the distance from $u$ to each of them is the reach~$\tau(V)$. Aamari \emph{et al.}\ \cite{AKCMRW} picture this by writing that \emph{``one can roll freely a ball of radius $\tau(V)$ around $V$"}.

Niyogi,  Smale and Weinberger refer to $\tau(V)^{-1}$
as the ``condition number of $V$''. B\"urgisser \emph{et al.}\ \cite{BCL} relate $\tau(V)^{-1}$ to the condition number of a semialgebraic set. For the purposes of our survey it suffices to understand how the reach effects the quality of the covering $U(\epsilon)$. The following result is a simplified version of \cite[Theorem 3.1]{NSW},
suitable for low dimensions.
Note that Theorem \ref{niyogi} only covers those
varieties $V \subset \RR^n$ that are smooth and compact.

\begin{theorem}[Niyogi, Smale, Weinberger 2006]
\label{niyogi}
Let $V \subset \RR^n$ be a compact manifold of dimension $d \leq 17$, with reach $\tau = \tau(V)$
and $d$-dimensional Euclidean volume $\nu = {\rm vol}(V)$.
Let $\Omega= \{u^{(1)}, \ldots, u^{(m)} \}$ be i.i.d.~samples
drawn from the uniform probability measure on $V$.
Fix $\epsilon = \frac{\tau}{4}$ and $\,\beta = 16^d \tau^{-d} \nu\,$. For any desired $\delta > 0$, fix the sample size at
\begin{equation}
\label{eq:samplesize} m \,\,>\,\,\, \beta \cdot \bigl(\log(\beta)+d + \log(\frac{1}{\delta}) \bigr)   .
\end{equation}
With probability $\geq 1-\delta$, the homology
groups of the following set coincide with those of $V$:
$$U(\epsilon)\,\,=\,\,\bigcup_{i=1}^m  \,\bigl\{x\in \mathbb{R}^n : \Vert x-u^{(i)}\Vert <\epsilon \bigr\}.$$
\end{theorem}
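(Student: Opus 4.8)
The plan is to reduce the statement to a covering argument that combines two ingredients: a deterministic lemma saying that if $\Omega$ is $\epsilon/2$-dense in $V$ (every point of $V$ lies within $\epsilon/2$ of some $u^{(i)}$), then the union of $\epsilon$-balls $U(\epsilon)$ deformation retracts onto $V$, so the inclusion $V \hookrightarrow U(\epsilon)$ induces isomorphisms on homology; and a probabilistic lemma estimating the sample size $m$ needed so that i.i.d.\ uniform samples are $\epsilon/2$-dense with probability at least $1-\delta$. The retraction lemma is the geometric heart: for a smooth compact manifold of reach $\tau$, one shows that whenever $0 < \epsilon < $ some explicit fraction of $\tau$ (here $\epsilon = \tau/4$ comfortably suffices), the nearest-point projection $\pi \colon U(\epsilon) \to V$ is well-defined and continuous on a tubular neighborhood of $V$ of radius $\tau$, and the straight-line homotopy $(x,t) \mapsto (1-t)x + t\,\pi(x)$ stays inside $U(\epsilon)$ because each fiber of $\pi$ meeting a ball $B(u^{(i)},\epsilon)$ is a segment that remains within distance $\epsilon$ of $u^{(i)}$ by the $\epsilon/2$-density and the convexity of the local picture. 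This is exactly where the reach enters: $\tau$ is precisely the radius within which $\pi$ is single-valued and $1$-Lipschitz in the relevant directions.

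First I would make precise the density requirement. Covering $V$ by geodesic (or Euclidean) balls of radius $\epsilon/2$, the probability that a single uniform sample misses a fixed such ball is $1 - \mu_{\min}$, where $\mu_{\min}$ is the minimal normalized volume of an $(\epsilon/2)$-ball in $V$. Using the reach bound, one has a lower volume bound for balls in $V$ of radius $r \le \tau/2$ of the form $\mathrm{vol}(B_V(p,r)) \ge c_d\, r^d$ with an explicit constant (essentially because such a ball projects bijectively and almost isometrically onto a Euclidean $d$-ball); plugging $r = \epsilon/2 = \tau/8$ gives $\mu_{\min} \ge \nu^{-1}(\tau/8)^d \omega_d / 2$ or similar, and after absorbing dimensional constants into the bookkeeping one recovers $\beta = 16^d \tau^{-d}\nu$ as the governing quantity. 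Then a union bound over a covering of $V$ by $N \le \beta$ such balls, together with the elementary inequality $(1-p)^m \le e^{-pm}$, shows that $m > \beta(\log\beta + d + \log(1/\delta))$ forces $N(1-\mu_{\min})^m < \delta$, so with probability $\ge 1-\delta$ every covering ball contains a sample point, which yields $\epsilon/2$-density. (One should be slightly careful to use a covering by balls of radius $\epsilon/4$ whose doubles still have radius $\le \epsilon/2$, so that a sample in the small ball certifies density at the center — a standard adjustment that only changes constants.)

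The two lemmas assemble immediately: on the event that $\Omega$ is $(\epsilon/2)$-dense, the retraction lemma gives $H_*(U(\epsilon)) \cong H_*(V)$, and this event has probability $\ge 1-\delta$ under (\ref{eq:samplesize}). I would state the retraction lemma for $\epsilon < \tau/2$ and then simply specialize to the convenient choice $\epsilon = \tau/4$ used in the theorem; the restriction $d \le 17$ is not needed for the topology and enters only because the original \cite{NSW} bound tracks the dimensional constant $16^d$ (versus a sharper but messier constant), so I would just note that $16^d$ is a valid choice of the constant in $\beta$ for all $d$, with the stated simplification being the version quoted here.

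The main obstacle is the deformation-retraction lemma, specifically verifying that the straight-line homotopy from $x \in U(\epsilon)$ to its projection $\pi(x) \in V$ never leaves $U(\epsilon)$. The subtle point is not that $\pi$ is well-defined (that is immediate from $\epsilon < \tau$), but that the \emph{entire segment} $[x,\pi(x)]$ is covered by the balls $B(u^{(i)},\epsilon)$: one must show that if $x \in B(u^{(i)},\epsilon)$ then every point of that segment is within $\epsilon$ of \emph{some} sample point, and the natural candidate — the sample point $u^{(j)}$ nearest to $\pi(x)$, at distance $\le \epsilon/2$ — requires a triangle-inequality estimate controlling $\|y - u^{(j)}\|$ for $y$ on the segment, which in turn needs the bound $\|x - \pi(x)\| < \epsilon$ and a monotonicity property of distance to $V$ along the projection segment that genuinely uses the reach. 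Getting these constants to close (so that $\epsilon = \tau/4$ works rather than some smaller fraction) is the delicate part; everything else is routine covering-number and Chernoff-style bookkeeping.
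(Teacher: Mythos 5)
The paper itself contains no proof of Theorem \ref{niyogi}: it is quoted, with simplified constants, from \cite[Theorem 3.1]{NSW}, and the surrounding text only explains the simplifications (the choice $\epsilon=\tau/4$, the scaling-invariance of $\beta$, and the role of $d\le 17$ in absorbing the unit-ball volume constant). So the only meaningful comparison is with the Niyogi--Smale--Weinberger argument itself, and your outline follows exactly that route: a deterministic lemma (if $\Omega$ is $\epsilon/2$-dense in $V$ and $\epsilon$ is a suitable fraction of $\tau$, then $U(\epsilon)$ deformation retracts onto $V$ along the nearest-point projection) combined with a covering and union-bound estimate in which the reach yields a lower bound on the volume of intrinsic balls, giving the sample size (\ref{eq:samplesize}). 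Structurally this is the standard proof, and your identification of where the reach enters is accurate.

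As a proof, however, there is a genuine gap, and you point at it yourself: the deformation-retraction lemma is asserted rather than established. Showing that the straight-line homotopy from $x$ to $\pi(x)$ never leaves $U(\epsilon)$ is precisely the content of the key proposition of \cite{NSW} (their Proposition 3.1, proved via the lemmas of their Section 4), where the reach enters through quantitative estimates --- the segment $[x,\pi(x)]$ is nearly normal to $V$ at $\pi(x)$, tangent spaces tilt slowly, and a ball around a sample near $\pi(x)$ captures the far end of the segment --- and where the admissible range $\epsilon<\sqrt{3/5}\,\tau$ is derived; your sketch supplies none of these estimates, and the phrase ``each fiber of $\pi$ meeting a ball is a segment'' is incorrect as stated, since fibers are normal discs of dimension $n-d$ (only the segment $[x,\pi(x)]$ inside the fiber is relevant). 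Likewise the volume bound $\mathrm{vol}(B_V(p,r))\ge c_d r^d$, the bookkeeping producing exactly $\beta=16^d\tau^{-d}\nu$, and the additive $d$ in (\ref{eq:samplesize}) (which in \cite{NSW} comes from using covering balls of radius $\epsilon/8$ inside density balls of radius $\epsilon/4$, not from $N\le\beta$ as you write) are only gestured at; and your remark that $16^d$ is a valid constant for all $d$ conflicts with the paper's own explanation that $d\le 17$ is what allows the dimensional volume factor to be dropped. Nothing in your plan is wrongheaded --- it is the right strategy --- but the central geometric lemma and the constant-chasing are the theorem, and they remain unproven in the proposal.
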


A few remarks are in order. First of all, the theorem is stated using the Euclidean distance and not the scaled Euclidean distance (\ref{Euclidean_scaled}). However, scaling the distance by a factor $t$ means scaling the volume by $t^d$, so the definition of $\beta$ in the theorem is invariant under scaling. Moreover, the theorem has been rephrased in a manner that makes
it easier to evaluate the right hand side of (\ref{eq:samplesize}) in cases of interest.
The assumption $d\leq 17$ is not important: it ensures that the volume of the unit ball in $\RR^d$
can be bounded below by $1$.
Furthermore, in \cite[Theorem 3.1]{NSW}, the tolerance $\epsilon$ can be any
real number between $0$ and $\tau/2$, but then $\beta$ depends in a complicated manner
on $\epsilon$. For simplicity, we took $\epsilon = \tau/4$.

Theorem \ref{niyogi} gives the asymptotics of a
sample size $m$ that suffices to reveal all topological features of $V$. For concrete parameter values it is less useful, though. For example, suppose that
$V$ has dimension $4$, reach $\tau = 1$, and volume $\nu = 1000$.
If we desire a 90\% guarantee that $U(\epsilon)$ has the same homology as $V$, so
$\delta = 1/10$, then $m$ must exceed $1,592,570,365$.  In addition to that, the theorem assumes that the sample was drawn from the uniform distribution on $V$. But in practice one will rarely meet data that obeys such a distribution. In fact, drawing from the uniform distribution on a curved object is a non-trivial affair \cite{Diaconis}.

In spite of its theoretical nature, the Niyogi-Smale-Weinberger
formula is useful in that it highlights the importance of the reach $\tau(V)$
for analyzing point samples. Indeed, the dominant
quantity in (\ref{eq:samplesize}) is $\beta$, and this grows to
the power of $d$ in $\tau(V)^{-1}$. It is therefore of interest to
better understand $\tau(V)$ and to develop tools for estimating~it.

We found the following formula by Federer \cite[Theorem 4.18]{Fed} to be useful.
It expresses the reach of
a manifold $V$ in terms of points and their tangent spaces:
\begin{equation}
\label{eq:federer}
\qquad \tau(V)\,\,= \inf_{v \neq u \in V} \frac{||u-v||^2}{2\delta}, \quad \text{ where }\,\,
 \delta \, = \min_{x\in \mathrm{T}_vV} \Vert (u-v) - x\Vert.
 \end{equation}
This formula relies upon knowing the tangent spaces at each point of $u\in V$.

Suppose  we are given the finite sample $\Omega$ from $V$.
If some equations for $V$ are also known, then we can use the estimator
$\widehat{\mathrm{T}}_{u^{(i)}}V$ for the tangent space that was derived in (\ref{eq:jacobiankernel}).
From this we get the following formula for the {\em empirical reach} of our sample:
\[ \qquad \hat{\tau}(V)\,\, =\, \min_{u,v\in \Omega \atop u \not= v}
 \frac{||u-v||^2}{2\widehat{\delta}}, \quad \text{ where }\,\,
 \widehat{\delta} \,= \min_{x\in \widehat{\mathrm{T}}_vV} \Vert (u-v) - x\Vert.\]
A similar approach for estimating the reach was proposed by Aamari {\it et al.}  \cite[eqn.~(6.1)]{AKCMRW}.

\subsection{Algebraicity of Persistent Homology}

It is impossible to compute in the field of real numbers $\RR$.
Numerical computations employ floating point approximations.
These are actually rational numbers. Computing in algebraic
geometry has traditionally been centered around exact
symbolic methods. In that context,  computing with
algebraic numbers makes sense as well.
In this subsection we argue that, in the setting of this paper,
most  numerical  quantities in persistent homology, like the
barcodes and the reach, have an algebraic nature.
Here we assume that the variety $V$ is defined over~$\mathbb{Q}$.

We discuss the work of Horobe\c{t} and Weinstein in \cite{HW}
which concerns metric properties of a given variety $V \subset \RR^n$
that are relevant for its {\em true persistent homology}. Here, the true
persistent homology of $V$, at parameter value $\epsilon$,
refers to the homology of the $\epsilon$-neighborhood of $V$.
Intuitively, the true persistent homology of the Trott curve
is the limit of barcodes as in Figure  \ref{fig:trott2},
where more and more points are taken,
eventually filling up the entire curve.

An important player is the {\em offset hypersurface} $\mathcal{O}_\epsilon(V)$.
This is the algebraic boundary of the $\epsilon$-neighborhood of $V$.
More precisely, for any positive value of $\epsilon$,
the offset hypersurface  is the Zariski closure
of the set of all points in $\RR^n$ whose distance to $V$
equals~$\epsilon$.
If $n=2$ and $V$ is a plane curve, then the {\em offset curve} $\mathcal{O}_\epsilon(V)$
is drawn by tracing circles along~$V$.

\begin{figure}[h]
\begin{center}
\includegraphics[scale=1]{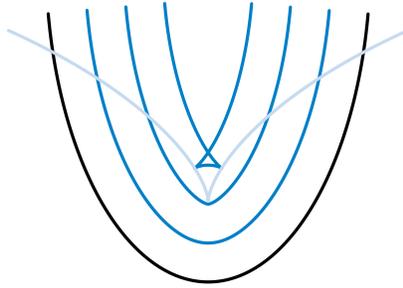}
\vspace{-0.2in}
\end{center}
\caption{
Offset curves (blue) and the evolute (light blue) of a conic (black).
\label{fig:offset}}
\end{figure}

\begin{example} \label{ex:ellipseoffset} \rm
In Figure~\ref{fig:offset} we examine a conic $V$, shown in black.
The light blue curve is its {\em evolute}. This is an {\em astroid} of degree $6$.
The evolute serves as the {\em ED discriminant} of $V$,
in the context seen in \cite[Figure 3]{DHOST}. The blue curves in Figure~\ref{fig:offset}
are the offset curves $\mathcal{O}_\epsilon(V)$. These have degree $8$ and
are smooth (over $\RR$) for small values of $\epsilon$. However, for larger
values of $\epsilon$, the offset curves are singular.
The transition point occurs at the cusp of the evolute.
\end{example}

It is shown in \cite[Theorem 3.4]{HW} that the endpoints of bars in
the true persistent homology of a variety~$V$
 occur at numbers  that are algebraic over
$\mathbb{Q}$.
The proof relies on results
in real algebraic geometry that
 characterize the family of fibers in a map
  of semialgebraic sets.

\begin{example} \rm
The bars of the barcode in Figure \ref{fig:trott2} begin and end near the  numbers
$$ \frac{1}{8} \,=\, 0.125 \,, \,\quad
\frac{\sqrt{  24025 - 217 \sqrt{9889}}}{248} \,=\, 0.19941426... \, , \,\quad
\frac{3}{4} \,=\, 0.75.
$$
These algebraic numbers delineate the true persistent homology of the Trott curve $V$.
\end{example}

The reach $\tau(V)$ of any real variety $V \subset \RR^n$
is also an algebraic number.
This follows from Federer's formula (\ref{eq:federer}) which expresses
$\tau(V)$ as the optimal value of a polynomial optimization problem.
In principle, the reach can be computed in exact arithmetic  from the polynomials that define $V$.
It remains an open problem how to do this effectively in practice.
Eklund's recent work on bottlenecks  \cite{Ekl} represents an important step
towards a solution.

At present we do not know a good formula or a tight bound for the
algebraic degrees of the barcode and the reach in terms
of the invariants of the variety $V$. Deriving such formulas
will require a further development and careful analysis
of the {\em offset discriminant} that was introduced in \cite{HW}.
We hope to return to this topic in the near future, as
it can play a fundamental link between
topology and algebraic geometry in the context of data science.

\section{Finding Equations}\label{sec:finding_equations}

Every polynomial in the ideal $I_V$ of the unknown variety $V$
vanishes on the sample $\Omega$.
The converse is not true, but it is reasonable to surmise that
it holds among polynomials of low degree. The ideal
$I_\Omega$ of the finite set $\Omega \subset \RR^n$ can be
computed using linear algebra.
 All our polynomials and ideals in this section lie in the ring
 $R = \RR[x_1,x_2,\ldots,x_n]$.

\subsection{Vandermonde Matrices}

 Let $\mathcal{M}$ be a finite linearly independent subset of $R$. We write
  $R_\mathcal{M}$ for the $\RR$-vector space with basis $\mathcal{M}$ and generally assume  that $\mathcal{M}$  is ordered, so that polynomials in $R_\mathcal{M}$ can be identified with vectors in $\RR^{\vert\mathcal{M}\vert}$.
  Two primary examples for $\mathcal{M}$ are the set of monomials
   ${\bf x}^e= x_1^{e_1} x_2^{e_2} \cdots x_n^{e_n}$
  of degree $d$ and the set of monomials of degree at most $d$. We
  use the notation $R_d$ and $R_{\leq d}$ for the corresponding
  subspaces of $R$. Their  dimensions $|\mathcal{M}|$ are
     $$ {\rm dim}(R_d)\,=\,  \binom{n+d-1}{d} \quad {\rm and} \quad
   {\rm dim}(R_{\leq d}) \,=\,  \binom{n+d}{d}. $$

 We write $U_\mathcal{M}(\Omega)$ for the $m \times |\mathcal{M}|$ matrix
 whose $i$-th row consists of the evaluations of the polynomials in $\mathcal{M}$ at the point $u^{(i)}$.
 Instead of $U_\mathcal{M}(\Omega)$ we write $U_d(\Omega)$ when $\mathcal{M}$ contains all monomials of degree $d$  and $U_{\leq d}(\Omega)$ when $\mathcal{M}$ contains monomials of degree $\leq d$.

 For example, if $n=1$, $m=3$, and $\Omega = \{u,v,w\}$ then
 $U_{\leq 3}(\Omega)$ is the  Vandermonde matrix
  \begin{equation}\label{Vandermonde} U_{\leq 3}(\Omega) =
  \begin{pmatrix}
\, u^3 & u^2 & u  & 1   \, \\
\, v^3 &  v^2 & v  & 1  \, \\
\, w^3 &  w^2 & w  & 1  \,
 \end{pmatrix} . \end{equation}
For $ n \geq 2$, we call $U_\mathcal{M}(\Omega)$ a  \emph{multivariate Vandermonde matrix}.
It has the following property:

\begin{remark}
The kernel of the multivariate Vandermonde matrix $U_\mathcal{M}(\Omega)$ equals the
vector space $\,I_\Omega \cap R_\mathcal{M}\,$ of all polynomials that
are linear combinations of $\mathcal{M}$ and  that vanish on $\Omega$.
\end{remark}

 The strategy for learning the variety $V$ is as follows.
 We hope to learn the ideal $I_V$ by making
  an educated guess for the set $\mathcal{M}$.
  The two desirable properties for $\mathcal{M}$ are:
 \begin{itemize}
 \item[(a)] The ideal $I_V$ of the unknown variety $V$ is
 generated by its subspace $I_V \cap R_\mathcal{M}$. \vspace{-0.1cm}
 \item[(b)] The inclusion of  $I_V \cap R_\mathcal{M}$ in its superspace
  $\,I_\Omega \cap R_\mathcal{M} = {\rm ker} (U_\mathcal{M}(\Omega))$
  is an equality.
  \end{itemize}
There is a fundamental tension between these two desiderata:
if $\mathcal{M}$ is too small then (a) will fail, and if
$\mathcal{M}$ is too large then (b) will fail.
But, of course, suitable sets $\mathcal{M}$ do always exist, since the
Hilbert's Basis Theorem ensures that all ideals in $R$ are finitely generated.

The requirement (b) imposes a lower bound on the size $m$ of the sample.
Indeed, $m$ is an upper bound on the
rank of $U_\mathcal{M}(\Omega)$, since that matrix has $m$ rows.
The rank of any matrix is equal to the number
of columns minus the dimension of the kernel. This implies:

\begin{lemma}
If (b) holds, then $\,m \geq |\mathcal{M}| - {\rm dim}(I_V \cap R_\mathcal{M})$.
\end{lemma}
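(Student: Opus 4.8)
The plan is a direct application of the rank--nullity theorem to the multivariate Vandermonde matrix $U_\mathcal{M}(\Omega)$, exactly as foreshadowed in the paragraph preceding the statement. First I would invoke the Remark above: the kernel of $U_\mathcal{M}(\Omega)$ is precisely $I_\Omega \cap R_\mathcal{M}$. Next I would feed in hypothesis (b), which asserts that this kernel equals the subspace $I_V \cap R_\mathcal{M}$; hence ${\rm dim}\,{\rm ker}\,U_\mathcal{M}(\Omega) = {\rm dim}(I_V \cap R_\mathcal{M})$. Since $R_\mathcal{M}$ is finite-dimensional with basis $\mathcal{M}$, rank--nullity applies verbatim and gives ${\rm rank}\,U_\mathcal{M}(\Omega) = |\mathcal{M}| - {\rm dim}(I_V \cap R_\mathcal{M})$.

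Finally, because $U_\mathcal{M}(\Omega)$ is an $m \times |\mathcal{M}|$ matrix, its rank cannot exceed its number of rows, so ${\rm rank}\,U_\mathcal{M}(\Omega) \leq m$. Combining this inequality with the preceding equality yields $m \geq |\mathcal{M}| - {\rm dim}(I_V \cap R_\mathcal{M})$, which is the claim. There is no genuine obstacle here: the statement is a one-line bookkeeping consequence of rank--nullity once the kernel of the Vandermonde matrix has been identified via the Remark, and the only point requiring a moment's care is that all three quantities $m$, $|\mathcal{M}|$, and ${\rm dim}(I_V \cap R_\mathcal{M})$ are finite, so that the equation ``rank $=$ columns $-$ nullity'' is legitimate.
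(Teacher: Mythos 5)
Your argument is correct and is exactly the paper's own reasoning: identify $\ker U_\mathcal{M}(\Omega)$ with $I_\Omega \cap R_\mathcal{M}$, use (b) to replace it by $I_V \cap R_\mathcal{M}$, apply rank--nullity, and bound the rank by the number $m$ of rows. Nothing further is needed.
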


In practice, however, the sample $\Omega$ is given and fixed.
Thus, we know $m$ and it cannot be increased.  The question is
 how to choose the set $\mathcal{M}$. This leads to
some interesting geometric combinatorics. For instance,
if we believe that $V$ is homogeneous with respect to some $\ZZ^r$-grading,
then it makes sense to choose a set $\mathcal{M}$ that consists of all monomials
in a given $\ZZ^r$-degree. Moreover, if we assume that $V$
has a parametrization by sparse polynomials then we
would use a specialized combinatorial analysis
to predict a set
$\mathcal{M}$ that works. A suitable choice of $\mathcal{M}$ can improve the numerical accuracy of the computations dramatically.

In addition to choosing the set of monomials $\mathcal{M}$, we face another problem: how to represent
 $I_\Omega \cap R_\mathcal{M}$? Computing a basis for the kernel of $U_\mathcal{M}(\Omega)$
 yields a set of generators for $I_\Omega \cap R_\mathcal{M}$. But which basis to use and
 how to compute it? For instance, the
 right-singular vectors of $U_\mathcal{M}(\Omega)$ with singular
value zero yield an \emph{orthonormal basis} of $I_\Omega \cap R_\mathcal{M}$.
But in applications one often meets ideals $I$ that have sparse generators.
This holds in Section~\ref{sec:vardata}.

\begin{example} \label{example_equations}
Suppose that we obtain a list of $20$ quadrics in nine variables as the result of computing the kernel of a Vandermonde matrix and each quadric looks
something like this:
\begin{tiny}
\begin{align*}
 &-0.037x_1^2 - 0.043x_1x_2 - 0.011x_1x_3 + 0.041x_1x_4 - 0.192x_1x_5 + 0.034x_1x_6 + 0.031x_1x_7 + 0.027x_1x_8+ 0.271x_1x_9 + 0.089x_2^2- 0.009x_2x_3\\
 &  + 0.192x_2x_4 + 0.041x_2x_5 + 0.044x_2x_6 - 0.027x_2x_7 + 0.031x_2x_8- 0.048x_2x_9 - 0.056x_3^2 - 0.034x_3x_4 - 0.044x_3x_5 + 0.041x_3x_6\\
 & - 0.271x_3x_7 + 0.048x_3x_8 + 0.031x_3x_9- 0.183x_4^2 - 0.043x_4x_5 - 0.011x_4x_6 + 0.039x_4x_7 + 0.004x_4x_8 + 0.019x_4x_9 - 0.057x_5^2\\
 & - 0.009x_5x_6 - 0.004x_5x_7 + 0.039x_5x_8 - 0.35x_5x_9 - 0.202x_6^2 - 0.019x_6x_7 + 0.35x_6x_8 + 0.039x_6x_9 - 0.188x_7^2 - 0.043x_7x_8 - 0.011x_7x_9\\
 & - 0.062x_8^2 - 0.009x_8x_9 - 0.207x_9^2 + 0.35x_1 + 0.019x_2 - 0.004x_3 - 0.048x_4 - 0.271x_5 + 0.027x_6 - 0.044x_7 + 0.034x_8 + 0.192x_9 + 0.302
\end{align*}
\end{tiny}
This is the first element in an orthonormal basis for $I_\Omega \cap R_{\leq 2}$,
where $\Omega$ is a sample drawn from a certain variety $V$ in $\RR^9$.
From such a basis, it is very hard to guess what $V$ might~be.

It turns out that $V $ is $ {\rm SO}(3)$, the group of rotations in $3$-space.
After renaming the nine variables, we find the $20$-dimensional space of quadrics
mentioned in Example \ref{ex:rotation}.
However, the quadrics seen in  (\ref{eq:nineSO3}) are much nicer.
They are sparse and easy to interpret.
\end{example}

For this reason we aim to compute \emph{sparse} bases of multivariate Vandermonde matrices.
There is a trade-off between obtaining sparse basis vectors and stability of the computations.
We shall discuss this issue in the next subsection.  See
Table~\ref{table_methods} for a brief summary.

\subsection{Numerical Linear Algebra}

Computing kernels of matrices of type $U_{\mathcal{M}}(\Omega)$ is a problem in numerical linear algebra. One scenario where the methodology has been developed and
proven to work well is the Generalized Principal Component Analysis
of Ma {\it et al.}~\cite{derksen}, where $V$ is a finite union of linear subspaces in $\RR^n$. For classical Vandermonde matrices, the Bjoerck-Pereyra algorithm~\cite{Bjoerck} accurately computes a LU-decomposition of the Vandermonde matrix; see \cite[Section~22]{higham}. This decomposition may then be used to compute the kernel. A generalization of this for multivariate Vandermonde matrices of the form $U_{\leq d}(\Omega)$ is given in \cite[Theorem 4.4]{Olver}. To date such a decomposition for $U_{\mathcal M}(\Omega)$ is missing for other subsets of monomials $\mathcal M$. Furthermore, \cite[Theorem 4.4]{Olver} assumes that the multivariate Vandermonde matrix is square and invertible, but this is never the case in our situation.

In the literature on numerical algebraic geometry, it is standard to represent varieties
by point samples, and there are several approaches for learning varieties, and even schemes,
from such numerical data. See e.g.~\cite{DH, GHPS} and the references therein.
From the perspective of commutative algebra, our
interpolation problem was studied in e.g.~\cite{BM, Mus}.

We developed and implemented three methods based on classical numerical linear algebra:
\begin{enumerate}
\item    via the R from a QR-decomposition, 
\vspace{-0.1in}
\item   via a singular value decomposition (SVD), or
\vspace{-0.1in}
\item via the reduced row echelon form (RREF) of $U_{\mathcal M}(\Omega)$.
\end{enumerate}
The goal is to compute a (preferably sparse) basis for the kernel of
$\,U_{\mathcal M}(\Omega)$, with $N = |\mathcal{M}|$.
 All three methods are implemented in our software.
Their descriptions are given below.
\begin{table}[h!]
      \centering
\begin{tabular}{lll}\hline
 ~&QR & slightly less accurate and fast than SVD, yields some sparse basis vectors.\cr
  ~&SVD & accurate, fast, but returns orthonormal and hence dense basis.\cr
 ~&RREF & no accuracy guarantees, not as fast as the others, gives a sparse basis.\\\hline
\end{tabular}
\caption{The three methods for computing the kernel of
the Vandermonde matrix $U_\mathcal{M}(\Omega)$.}
\label{table_methods}
\end{table}

\begin{small}
\smallskip

\begin{algorithm}[H]
\KwIn{A multivariate Vandermonde matrix $U\in\mathbb{R}^{m\times N}$ and a tolerance value $\tau\geq 0$.}
\KwOut{A basis for the kernel of $U$.}
Compute the QR-decomposition $U = QR$, where $Q$ is orthogonal and $R$ is upper triangular\;
Put $I = \{i : 1\leq i\leq N, \vert R_{ii} \vert <\tau\}$, $J=[N]\backslash I$, $R' = R^{[m]\times J}$ and $\mathcal{B} = \emptyset$\;
\For{$i\in I$}{Initialize $a\in\mathbb{R}^N$, $a=(a_1,\ldots,a_N)$ and put $a_i=1$\;
Solve $R'y=R_i$ for $y$, where $R_i$ is the $i$-th column of $R$.\;
Put $(a_1,\ldots,a_{i-1},a_{i+1},\ldots,a_N) = y$\;
Update $\mathcal{B} \leftarrow \mathcal{B}\cup \{a\}$\;
}
Return $\mathcal{B}$.
\caption{with\_qr\label{qr}}
\end{algorithm}

\smallskip

\begin{algorithm}[H]
\KwIn{A multivariate Vandermonde matrix $U\in\mathbb{R}^{m\times N}$ and a tolerance value $\tau\geq 0$.}
\KwOut{A basis for the kernel of $U$.}
Compute the singular value decomposition $U = X\Sigma Y$, where $\Sigma = \mathrm{diag}(\sigma_1,\ldots,\sigma_N)$.\;
Let $k:=\#\{1\leq i\leq N: \sigma_i<\tau\}$\;
Return the last $k$ columns of $Y$\;
\caption{with\_svd\label{svd}}
\end{algorithm}
 \medskip

\smallskip

\begin{algorithm}[H]
\KwIn{A multivariate Vandermonde matrix $U\in\mathbb{R}^{m\times N}$ and a tolerance value $\tau\geq 0$.}
\KwOut{A basis for the kernel of $U$.}
Compute the reduced row-echelon form $A$ of $U$\;
Put $I = \{i : 1\leq i\leq m, \Vert A_{i} \Vert > \sqrt{N}\tau\}$, where $A_i$ is the $i$-th row of $A$\;
Put $B:=A^{I\times [N]}$, $k:=\#I$ and initialize $\mathcal{B}=\emptyset$\;
For $1\leq i\leq k$ let $j_i$ be the position of the first entry in the $i$-th row of $B$ that has absolute value larger than $\tau$ and put $J:=[N]\backslash\{j_1,\ldots,j_k\}$\;
\For{$j\in J$}{Put $J':=\{1\leq i\leq N : i<j\}$\;
Initialize $a\in\mathbb{R}^N$, $a=(a_1,\ldots,a_N)$ and put $a_j = 1$ and $a_i = 0$ for $i\neq j$.\;
\For{$i\in J'$}{ $a_{i} = -B_{i,j}$\; Update $\mathcal{B} \leftarrow \mathcal{B}\cup \{a\}$\;}
}
Return $\mathcal{B}$.
\caption{with\_rref\label{rref}}
\end{algorithm}
\end{small}

\medskip

Each of these three methods has its upsides and downsides. These are summarized
in Table \ref{table_methods}.
The algorithms require a tolerance $\tau\geq 0$ as input. This tolerance value determines the {\em numerical rank} of the matrix. Let $\sigma_1 \geq \cdots \geq \sigma_{\min\{m,N\}}$ be the ordered singular values of the $m\times N$ matrix $U$.
As in the beginning of Subsection \ref{sub:dimension},
the numerical rank of $U$ is
\begin{equation}
\label{eq:numericalrank}
 r(U,\tau) \,\,:=\,\,\# \bigl\{\,i\,\, |\,\, \sigma_i \geq  \tau\,\bigr\}.
\end{equation}
Using the criterion in  \cite[\S 3.5.1]{Demmel}, we can set
$\tau = \varepsilon\, \sigma_1\, \max\{m,N\}$, where~$\epsilon$ is the machine precision. The rationale behind this choice is \cite[Corollary 5.1]{Demmel}, which says that the round-off error in the $\sigma_i$ is bounded by $\Vert E\Vert$, where $\Vert\cdot \Vert$ is the spectral norm and $U+E$ is the matrix whose singular values were computed. For backward stable algorithms we may use the bound $\Vert E\Vert = \mathcal O(\varepsilon) \sigma_1$. On the other hand,  our experiments suggest that an appropriate value for $\tau$ is given by $\frac{1}{2}(\sigma_i + \sigma_{i+1})$, for which the jump from $\log_{10}(\sigma_i)$ to $\log_{10}(\sigma_{i+1})$ is significantly large. This choice is particularly useful for noisy data (as seen in Subsection~\ref{sub:cyclo-octane}).
In case of noise the first definition of~$\tau$ will likely fail to detect the true rank
 of $U_{\leq d}(\Omega)$.
  The reason for this lies in the numerics of Vandermonde matrices, discussed below.

We apply all of the aforementioned to the multivariate Vandermonde matrix
$U_\mathcal{M}(\Omega)$,
for any finite set  $\mathcal{M}$  in $ R$ that is linearly independent.
We thus arrive at the following algorithm.

\begin{small}
\medskip
\begin{algorithm}[H]
\KwIn{A sample of points $\Omega =
  \{u^{(1)},u^{(2)}, \ldots, u^{(m)}\} \subset \mathbb{R}^n$,
  a finite set $\mathcal M$  of monomials  in $n$ variables,
  and a tolerance value $\tau>0$.
}
\KwOut{A basis $\mathcal{B}$ for the kernel of $U_{\mathcal M}(\Omega)$\;}
Construct the multivariate Vandermonde matrix $U_{\mathcal M}(\Omega)$\;
Compute a basis $\mathcal{B}$ for the kernel of $U_{\mathcal M}(\Omega)$ using
Algorithm \ref{qr}, \ref{svd} or \ref{rref}\;
Return $\mathcal{B}$\;
\caption{FindEquations\label{alg_equations}}
\end{algorithm}
\medskip
\end{small}

\begin{remark}
Different sets of quadrics can be obtained by applying
Algorithm \ref{alg_equations} to a set $\Omega$ of $200$ points
sampled uniformly from the  group $\mathrm{SO}(3)$.
The dense equations in Example \ref{example_equations} are obtained
using Algorithm~\ref{svd} (SVD) in Step 4.  The more desirable sparse equations from~(\ref{eq:nineSO3})
are found when using Algorithm~\ref{qr} (with QR).
In both cases the tolerance was set to be $\tau \approx 4 \cdot 10^{-14}\, \sigma_1\,$, where $\sigma_1$ is the largest singular value of the Vandermonde matrix~$U_{\leq 2}(\Omega)$.
 \end{remark}

Running Algorithm \ref{alg_equations} for a few good choices of $\mathcal{M}$
often leads to an initial list of non-zero polynomials that lie in
$I_\Omega$ and also in $I_V$.
Those polynomials can then be used to infer
an upper bound on the dimension and other information about $V$.
This is explained in Section~\ref{sec:using}.
Of course, if we are lucky, we obtain a generating
set for $I_V$ after a few iterations.

If $m$ is not too large and the coordinates of the points $u^{(i)}$
are rational, then it can be preferable to compute the kernel of $U_{\mathcal{M}}(\Omega)$
symbolically. Gr\"obner-based interpolation methods, such as
the {\em Buchberger-M\"oller algorithm} \cite{BM},
have the flexibility to select $\mathcal{M}$ dynamically. With this, they directly
compute the generators for the ideal $I_\Omega$,
rather than the user having to worry about  the matrices
$U_{\leq d}(\Omega)$ for a sequence of degrees $d$.
In short, users should keep symbolic methods in the
back of their minds when contemplating Algorithm \ref{alg_equations}.

In the remainder of this section, we discuss numerical issues associated
with Algorithm~\ref{alg_equations}. The key step is computing the kernel of the multivariate Vandermonde matrix $U_{\mathcal M}(\Omega)$. As illustrated in (\ref{Vandermonde}) for
samples $\Omega $ on the line $(n=1)$,
 and $\mathcal M$ being all monomials up to a fixed degree, this matrix is a \emph{Vandermonde matrix}. It is conventional wisdom
 that Vandermonde matrices are severely ill-conditioned \cite{Pan}. Consequently, numerical linear algebra solvers are expected to perform poorly when attempting to compute the kernel of $U_d(\Omega)$.

One way to circumvent this problem is to use a set of \emph{orthogonal polynomials} for $\mathcal{M}$. Then, for large sample sizes $m$, two distinct columns of $U_{\mathcal{M}}(\Omega)$ are approximately orthogonal, implying that $U_{\mathcal{M}}(\Omega)$ is well-conditioned. This is because the inner product between the columns associated to $f_1,f_2\in\mathcal{M}$ is approximately the integral of $f_1\cdot f_2$ over $\mathbb{R}^n$. However, a sparse representation in orthogonal polynomials does not yield a sparse representation in the monomial basis. Hence, to get sparse polynomials in the monomials basis from $U_{\mathcal{M}}(\Omega)$, we must employ other methods than the ones presented here. For instance, techniques from compressed sensing may help to compute sparse representations in the monomial basis.

We are optimistic that a numerically-reliable algorithm for computing the kernel of matrices $U_{\leq d}(\Omega)$ exists. The Bjoerck-Pereyra algorithm~\cite{Bjoerck} solves linear equations $Ua=b$ for
an $n \times n$ Vandermonde matrix $U$.
There is a theoretical guarantee that the computed solution~$\hat{a}$ satisfies $\vert a-\hat{a}\vert \leq 7 n^5 \epsilon + \mathcal O(n^4\epsilon^2)$; see \cite[Corollary 22.5]{higham}. Hence, $\hat{a}$ is highly accurate -- despite $U$ being ill-conditioned. This is confirmed by the experiment mentioned in the beginning of \cite[Section 22.3]{higham}, where a linear system with $\kappa(U) \sim 10^9$ is solved with a relative error of~$5\epsilon$. We suspect that a Bjoerck-Pereyra-like algorithm together with a thorough structured-perturbation analysis for multivariate Vandermonde matrices would equip us with an accurate algorithm for finding equations. For the present article,
we stick with the three methods  above, while bearing in mind the difficulties that ill-posedness
can cause.

\section{Learning from Equations}\label{sec:using}

At this point we assume that the methods in the previous two sections
have been applied. This means that we have an estimate $d$ of what the dimension
of $V$ might be, and we know a set $\mathcal{P}$ of polynomials that vanish on
the finite sample $\Omega \subset \RR^n$.
We assume that the sample size $m$ is large enough so that
the polynomials in $\mathcal{P}$ do in fact vanish on $V$.
We now use $\mathcal{P}$ as our input. Perhaps the unknown variety $V$ is
 one of the objects seen in  Subsection \ref{subsec:variety}.

\subsection{Computational Algebraic Geometry}

A finite set of polynomials  $\mathcal{P}$ in $\QQ[x_1,\ldots,x_n]$
is the typical input for algebraic geometry software.
Traditionally, symbolic packages like
{\tt Macaulay2}, {\tt Singular} and {\tt CoCoA}
were used to study $\mathcal{P}$. Buchberger's
Gr\"obner basis algorithm is the workhorse underlying
this approach. More recently, numerical algebraic geometry has emerged, offering
 lots of promise for innovative and accurate methods in data analysis.
We refer to the textbook \cite{BertiniBook}, which centers around the excellent software {\tt Bertini}.
Next to using \texttt{Bertini}, we also employ the \texttt{Julia} package \texttt{HomotopyContinuation.jl} \cite{BT}. Both symbolic and numerical methods are valuable for data analysis.
The questions we ask in this subsection can be answered with~either.

In what follows we assume that the unknown variety $V$ is equal to
the zero set of the input polynomials $\mathcal{P}$.
We seek to answer the following questions
over the complex numbers:

\begin{enumerate}
\item What is the dimension of $V$? \vspace{-0.2cm}
\item What is the degree of $V$? \vspace{-0.21cm}
\item Find the irreducible components of $V$
and determine their dimensions and degrees.
\end{enumerate}

Here is an example
that illustrates the workflow we imagine for analyzing samples $\Omega$.

\begin{example} \label{ex:hankel44}
The variety of Hankel matrices of size $4 \times 4$
and rank $2$ has the parametrization
$$ {\footnotesize
\begin{bmatrix}
 a & b & c & x \\
 b & c & x & d \\
 c & x & d & e \\
 x & d & e & f
 \end{bmatrix} \quad = \quad
\begin{bmatrix}
s_1^3 & s_2^3 \\
s_1^2 t_1 & s_2^2 t_2 \\
s_1 t_1^2 & s_2 t_2^2 \\
t_1^3 & t_2^3
\end{bmatrix}
\begin{bmatrix}
s_1^3 & s_1^2 t_1 & s_1 t_1^2 & t_1^3 \\
s_2^3 & s_2^2 t_2 & s_2 t_2^2 & t_2^3
\end{bmatrix}.
} $$
Suppose that an adversary constructs a dataset $\Omega$
of size $m=500$ by the following process.
He picks random integers $s_i$ and $t_j$, computes the $4 \times 4$-Hankel
matrix, and then deletes the antidiagonal coordinate $x$.
For the remaining six coordinates he fixes some random ordering, such as
 $(c,f,b,e,a,d)$. Using this ordering, he lists the $500$ points.
 This is our input $\Omega \subset \RR^6$.

 We now run Algorithm \ref{alg_equations} for the
  $m \times 210   $-matrix $U_{\leq 4}(\Omega)$.
The output of this computation is the following pair of quartics
which vanishes on the variety $V \subset \RR^6$ that is described above:
\begin{equation}
\label{eq:Ptwo}
{\footnotesize
\begin{matrix}
\mathcal{P} &= & \bigl\{\, acf^2+ad^2f-2ade^2-b^2f^2+2bd^2e-c^2df+c^2e^2-cd^3,  \\
&& \quad \,\,\, a^2 df-a^2e^2+ac^2f-acd^2-2b^2cf+b^2d^2+2bc^2e-c^3d \,\bigr\}.
\end{matrix} }
\end{equation}
Not knowing the true variety, we optimistically believe that the zero set of $\mathcal{P}$ is equal to $V$.
This would mean that $V$ is a complete intersection, so it has
codimension $2$ and degree $16$.

At this point, we may decide to compute a primary decomposition of $\langle \mathcal{P} \rangle$.
We then find that there are two components of codimension $2$, one
of degree $3$ and the other of degree $10$.
Since $3 + 10 \not= 16$, we learn that $\langle \mathcal{P} \rangle$
is not a radical ideal. In fact, the degree $3$ component appears with multiplicity $2$.
Being intrigued, we now return to computing equations from~$\Omega$.

From the kernel of the $m \times 252$-matrix $U_5(\Omega)$, we find two
new quintics in $I_\Omega$. These only reduce the degree to $3+10= 13$.
Finally, the kernel of the $m \times 452$-matrix $U_6(\Omega)$ suffices.
The ideal $I_V$ is generated by $2$ quartics,
$2$ quintics and $4$ sextics.
The mystery variety $V\subset \RR^6$ has the same dimension and degree
as  the rank $2$ Hankel variety in $\RR^7$ whose projection it~is.
\end{example}

Our three questions boil down  to solving a system   $\mathcal{P}$
of polynomial equations.   Both symbolic and numerical techniques can be used for that task.
Samples $\Omega$ seen in applications
are often large, are represented by
floating numbers, and have errors and outliers. In those cases, we use {\em Numerical Algebraic Geometry} \cite{BertiniBook, BT}. For instance, in Example \ref{ex:hankel44} we intersect~(\ref{eq:Ptwo}) with a linear space of dimension $2$. This results in $16$ isolated solutions. Further numerical analysis in step 3 reveals the desired irreducible component of degree $10$.

\smallskip

In the numerical approach to answering the three questions, one proceeds as follows:
\begin{enumerate}
\item We add $s$ random (affine-)linear equations to $\mathcal{P}$
and we solve the resulting system in~$\CC^n$. If there are no solutions,
then ${\rm dim}(V) < s$. If the solutions
are not isolated, then~${\rm dim}(V) > s$.
Otherwise, there are finitely many solutions, and
${\rm dim}(V) = s$.
\item The degree of $V$ is the finite number of
solutions found in step 1.
\item  Using {\em monodromy loops} (cf.~\cite{BertiniBook}),
we can identify the intersection of a linear space $L$ with any
irreducible component of $V_{\mathbb{C}}$ whose
codimension equals ${\rm dim}(L)$.
\end{enumerate}

The dimension diagrams from Section \ref{sec:dimension}
can be used to guess a suitable range of values for the
parameter $s$ in step 1. However, if we have equations at hand, it is better to determine the dimension $s$ as follows. Let $\mathcal{P} = \{f_1,\ldots,f_k\}$ and $u$ be any data point in $\Omega$. Then, we choose the $s$ from step 1 as the corank of the Jacobian matrix of $f=(f_1,\ldots,f_k)$ at $u$; i.e,
  \begin{equation}\label{how_to_set_s}
   s := \mathrm{dim}\, \mathrm{ker}\, Jf(u).
\end{equation}
Note that $s=\mathrm{dim}\, V(\mathcal{P})$ as long as $u$ is not a singular point of $V(\mathcal{P})$. In this case, $s$ provides an upper bound for the true dimension of $V$.
 That is why it is important in step~3 to use
higher-dimensional linear spaces $L$ to detect lower-dimensional components of $V(\mathcal{P})$.

 \begin{example}
 Take $m=n=3$ in Example \ref{ex:rankone}.
 Let $\mathcal{P}$ consist of the four
 $2 \times 2$-minors that contain the upper-left matrix entry $x_{11}$.
 The ideal  $\langle \mathcal{P} \rangle$ has codimension $3$ and degree~$2$.
 Its top-dimensional components are
 $\langle x_{11}, x_{12}, x_{13} \rangle$ and
 $ \langle x_{11}, x_{21}, x_{31} \rangle$. However, our
 true model~$V$ has codimension $4$ and degree $6$: it is defined
 by all nine $2 \times 2$-minors.
 Note that $\langle \mathcal{P} \rangle$ is not radical.
 It also has an embedded prime of codimension $5$, namely
  $\langle x_{11}, x_{12}, x_{13} ,  x_{21}, x_{31}  \rangle$.
  \end{example}

\subsection{Real Degree and Volume}

The discussion in the previous subsection was about the complex points
of the variety $V$. The geometric quantity ${\rm deg}(V)$
records a measurement over $\CC$. It is insensitive to the
geometry of the real points of $V$. That perspective does not distinguish between
$\mathcal{P} = \{x^2+y^2-1\}$ and $\mathcal{P} = \{x^2+y^2+1\}$.
That distinction is seen through the lens of  {\em real algebraic geometry}.

In this subsection we study metric properties of
 a real projective variety $V \subset \PP^{n}_\RR$.
We explain how to estimate the {\em volume} of $V$. Up to a constant depending on $d=\mathrm{dim}\, V$,
this volume equals the {\em real degree} ${\rm deg}_\RR(V)$, by which we mean
the expected number of real intersection points with a linear subspace
of codimension ${\rm dim}(V)$; see Theorem \ref{kinematic} below.

To derive these quantities, we use \emph{Poincar\'e's kinematic formula} \cite[Theorem~3.8]{Howard}.
 For this we need some notation. By \cite{Leichtweiss} there is a unique orthogonally invariant measure $\mu$ on~$\mathbb{P}_\RR^n$ up to scaling. We choose the scaling in a way compatible
 with the unit sphere~$\mathbb{S}^{n}$:
 $$ \mu(\mathbb{P}_\RR^n)\,\,= \frac{1}{2}\mathrm{vol}(\mathbb{S}^{n})
\,\, =\,\, \frac{\pi^{\frac{n+1}{2}}}{\Gamma(\frac{n+1}{2})}.$$
 This makes sense  because $\mathbb{P}_\RR^n$ is doubly covered by~$\mathbb{S}^n$.
 The $n$-dimensional volume $\mu$ induces a~$d$-dimensional
 measure of volume on $\PP^n_\RR$ for any $d = 1,2,\ldots,n-1$.
 We use that measure for~$d = {\rm dim}(V)$ to define the
 volume of our real projective variety as
     $\mathrm{vol}(V):=\mu(V)$.

  Let $\mathrm{Gr}(k,\mathbb{P}_\RR^n)$ denote the Grassmannian
     of $k$-dimensional linear spaces in $\mathbb{P}_\RR^n$.
     This is a real manifold of dimension $(n-k)(k+1)$. Because of the Pl\"ucker embedding it is also
     a projective variety. We saw this
for $k=1$ in Example  \ref{ex:pfaffians},  but  we will not use it here. Again by~\cite{Leichtweiss}, there is a unique orthogonally invariant measure $\nu$
on $\mathrm{Gr}(k,\mathbb{P}_\RR^n)$ up to scaling. We choose the scaling $\nu(\mathrm{Gr}(k,\mathbb{P}_\RR^n))=1$.
This defines the \emph{uniform probability distribution} on the Grassmannian.
   Poincar\'e's Formula    \cite[Theorem 3.8]{Howard} states:

\begin{theorem}[Kinematic formula in projective space]\label{kinematic}
Let $V$ be a smooth projective variety  of codimension $k=n-d$
in $\PP^n_\RR$. Then its volume is the volume of $\,\PP^d_\RR$ times the real degree:
$$\mathrm{vol}(V) \,=\, \frac{\pi^{\frac{d+1}{2}}}{\Gamma(\frac{d+1}{2})} \cdot {\rm deg}_\RR(V)
\quad \text{where} \quad
{\rm deg}_\RR(V) \,=
\, \int_{L\in\mathrm{Gr}(k,\mathbb{P}_\RR^n)} \, \#(L\cap V) \, \mathrm{d} \nu.$$
\end{theorem}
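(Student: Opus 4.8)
The plan is to recognize Theorem~\ref{kinematic} as a special case of the general Poincaré kinematic formula in a compact homogeneous space acted on transitively by a group of isometries, here the space $\PP^n_\RR$ under the orthogonal group. The general Poincaré formula (in the form of \cite[Theorem~3.8]{Howard}) states that for two submanifolds $V$ and $W$ of complementary dimension in a rank-one symmetric space, the average over all isometries $g$ of the number of intersection points $\#(gV \cap W)$ equals a universal constant (depending only on the ambient dimensions) times the product of the volumes $\mathrm{vol}(V)\,\mathrm{vol}(W)$, once the invariant measures have been normalized consistently. I would apply this with $W = L$ a fixed $k$-dimensional projective linear subspace: then averaging over $g$ in the orthogonal group is the same as averaging over $L' = gW$ in $\mathrm{Gr}(k,\PP^n_\RR)$ with respect to the invariant probability measure $\nu$, because the orthogonal group acts transitively on that Grassmannian. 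This immediately yields
$$
\int_{L \in \mathrm{Gr}(k,\PP^n_\RR)} \#(L \cap V)\, \mathrm{d}\nu \,\,=\,\, C_{n,d} \cdot \mathrm{vol}(V) \cdot \mathrm{vol}(\PP^k_\RR),
$$
for a constant $C_{n,d}$, since a linear space $L$ of dimension $k = n-d$ is smooth of the complementary dimension and its volume in our normalization is $\mathrm{vol}(\PP^k_\RR) = \pi^{\frac{k+1}{2}}/\Gamma(\frac{k+1}{2})$.

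The remaining work is to pin down the constant $C_{n,d}$ and to rearrange the identity into the stated form. First I would observe that the left-hand integral is exactly what the paper calls $\mathrm{deg}_\RR(V)$, so the displayed identity reads $\mathrm{deg}_\RR(V) = C_{n,d} \cdot \mathrm{vol}(V) \cdot \mathrm{vol}(\PP^k_\RR)$, and the theorem's claimed relation $\mathrm{vol}(V) = \frac{\pi^{(d+1)/2}}{\Gamma(\frac{d+1}{2})} \cdot \mathrm{deg}_\RR(V) = \mathrm{vol}(\PP^d_\RR) \cdot \mathrm{deg}_\RR(V)$ is equivalent to the assertion that $C_{n,d} \cdot \mathrm{vol}(\PP^k_\RR) \cdot \mathrm{vol}(\PP^d_\RR) = 1$. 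To verify this normalization constant I would run the formula through a single known example where everything is computable: take $V = \PP^d_\RR$ itself, a linear subspace, for which $\#(L \cap V) = 1$ for generic $L$ (two generic projective linear spaces of complementary dimension meet in exactly one point), hence $\mathrm{deg}_\RR(V) = 1$, and $\mathrm{vol}(V) = \mathrm{vol}(\PP^d_\RR)$. Substituting, $\mathrm{vol}(\PP^d_\RR) = \mathrm{vol}(\PP^d_\RR) \cdot 1$, which is consistent and, read the other way, forces $C_{n,d} \cdot \mathrm{vol}(\PP^k_\RR) \cdot \mathrm{vol}(\PP^d_\RR) = 1$ provided Howard's constant really is the dimension-only constant appearing in his Theorem~3.8. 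I would cite \cite[Theorem~3.8]{Howard} for the precise value of that constant under the sphere-compatible normalization $\mu(\PP^n_\RR) = \tfrac12 \mathrm{vol}(\mathbb{S}^n)$ that the paper has fixed, rather than recompute it.

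The main obstacle I anticipate is bookkeeping of normalizations: Howard's formula is stated with its own choice of invariant measures on the ambient space and on the space of subvarieties, and one must carefully match those to the paper's conventions — the factor $\tfrac12$ coming from $\mathbb{S}^n$ double-covering $\PP^n_\RR$, the induced $d$-dimensional Hausdorff-type measure on $\PP^n_\RR$ used to define $\mathrm{vol}(V)$, and the probability normalization $\nu(\mathrm{Gr}(k,\PP^n_\RR)) = 1$. A secondary subtlety is that Poincaré's formula as usually stated requires both intersecting sets to be submanifolds; here $V$ is assumed smooth, so this is fine, but one should note that the formula is applied fiberwise and the genericity of $L$ (so that $L \cap V$ is a transverse, hence finite, intersection) holds for $\nu$-almost every $L$ by Sard's theorem, which is why the integrand $\#(L \cap V)$ is almost everywhere finite and measurable. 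Once the constant is fixed by the linear-space check and the normalizations are aligned, the theorem follows by direct substitution, so I would keep that final rearrangement brief.
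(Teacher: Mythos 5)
Your proposal is correct and matches the paper's treatment: the paper does not prove the statement from scratch but quotes it as Poincar\'e's formula, \cite[Theorem~3.8]{Howard}, under the same normalizations ($\mu(\PP^n_\RR)=\tfrac12\mathrm{vol}(\mathbb{S}^n)$ and $\nu$ a probability measure), and it records exactly your sanity check that a linear $V=\PP^d_\RR$ gives $\#(L\cap V)=1$ and hence $\mathrm{vol}(V)=\mathrm{vol}(\PP^d_\RR)$. Your extra steps (transitivity identifying the group average with the Grassmannian average, and pinning the constant by the linear-space case) are a reasonable fleshing-out of the same citation-based route.
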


Note that in case of $V$ being a linear space of dimension $d$, we have $\#(L\cap V) = 1$ for all $L\in\mathrm{Gr}(n-d,\mathbb{P}_\RR^n)$. Hence, $\mathrm{vol}(V) = \mathrm{vol}(\mathbb{P}_\mathbb{R}^d)$, which verifies the theorem in this instance.

The theorem suggests an algorithm. Namely, we
sample linear spaces $L_1,L_2,\ldots,L_N$ independently and uniformly at random, and compute
the number $r(i)$ of real points in $V\cap L_i$ for each $i$.
This can be done symbolically (using Gr\"obner bases)
or numerically (using homotopy continuation).
We obtain the following estimator for ${\rm vol}(V)$:
$$
\widehat{ {\rm vol}}(V) \,\,=\,\,
\frac{\pi^{\frac{d+1}{2}}}{\Gamma(\frac{d+1}{2})} \cdot\frac{1}{N}\sum_{i=1}^N r(i).
$$

We can sample uniformly from $\mathrm{Gr}(k,\mathbb{P}_\RR^n)$ by using the following lemma:

\begin{lemma}
\label{lem:poincare}
Let $A$ be a
random $(k{+}1) \times (n {+} 1)$ matrix with independent standard Gaussian entries. The
row span of $A$ follows the uniform distribution  on
the Grassmannian $\mathrm{Gr}(k,\mathbb{P}_\RR^n)$.
\end{lemma}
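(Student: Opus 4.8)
The plan is to identify the law of the row span of $A$ with the unique orthogonally invariant probability measure $\nu$ on $\mathrm{Gr}(k,\mathbb{P}_\RR^n)$, exploiting the rotation invariance of the Gaussian distribution. Recall that a point of $\mathrm{Gr}(k,\mathbb{P}_\RR^n)$ is the same thing as a $(k{+}1)$-dimensional linear subspace of $\RR^{n+1}$, so the row span of $A$ is the natural candidate for such a point.

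First I would check that the map $A\mapsto\mathrm{rowspan}(A)$ is defined almost surely. A $(k{+}1)\times(n{+}1)$ real matrix fails to have full row rank precisely when all of its $(k{+}1)\times(k{+}1)$ minors vanish, which cuts out a proper algebraic subvariety of $\RR^{(k+1)(n+1)}$; this is a Lebesgue null set, hence also null for the standard Gaussian measure, which is absolutely continuous with respect to Lebesgue measure. So with probability $1$ the rows of $A$ span a $(k{+}1)$-dimensional subspace $W(A)\subset\RR^{n+1}$, and the pushforward $\mu_A$ of the probability measure on the sample space along $A\mapsto W(A)$ is a well-defined Borel probability measure on $\mathrm{Gr}(k,\mathbb{P}_\RR^n)$.

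Next I would show that $\mu_A$ is invariant under the action of $\mathrm{O}(n+1)$. Saying that the entries of $A$ are i.i.d.\ standard Gaussians is equivalent to saying that its rows are i.i.d.\ $N(0,\mathrm{Id}_{n+1})$ vectors; since that distribution on $\RR^{n+1}$ is orthogonally invariant, the matrix $AQ^{\mathsf T}$ has the same distribution as $A$ for every $Q\in\mathrm{O}(n+1)$. On the other hand $W(AQ^{\mathsf T})=Q\cdot W(A)$ under the usual action of $\mathrm{O}(n+1)$ on subspaces, so $\mu_A$ is $\mathrm{O}(n+1)$-invariant. Finally, $\mathrm{Gr}(k,\mathbb{P}_\RR^n)$ is a compact homogeneous space under this transitive action, and by \cite{Leichtweiss} it carries a unique orthogonally invariant measure up to scaling; normalized to total mass $1$, this is exactly the uniform distribution $\nu$ used in the text. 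Since $\mu_A$ is an orthogonally invariant probability measure, $\mu_A=\nu$, which is the assertion.

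I do not expect a serious obstacle: the lemma is essentially folklore. The one point that needs genuine care is the almost-sure full-rank statement, since that is what makes $A\mapsto W(A)$ well defined off a null set; everything after that is a one-line consequence of Gaussian rotation invariance together with the cited uniqueness of the invariant measure on the Grassmannian.
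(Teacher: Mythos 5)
Your proof is correct and follows essentially the same route as the paper: Gaussian rotation invariance gives orthogonal invariance of the law of the row span, and uniqueness of the orthogonally invariant probability measure on $\mathrm{Gr}(k,\mathbb{P}_\RR^n)$ (from \cite{Leichtweiss}) forces it to equal $\nu$. Your extra verification that $A$ has full row rank almost surely is a sensible detail the paper leaves implicit, but it does not change the argument.
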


\begin{proof}
The distribution of the row space of $A$ is orthogonally invariant. Since the
orthogonally invariant probability measure on  $\mathrm{Gr}(k,\mathbb{P}_\RR^n)$
is unique,  the two distributions agree.
\end{proof}

\begin{example} \rm
Let $n=2$, $k=1$, and let $V$ be the {\em Trott curve} in $\PP^2_\RR$.
The area of the projective plane $\PP^2_\RR$ is half of the surface area
of the unit circle:
$ \mu(\PP^1_\RR) \,= \, \frac{1}{2} \cdot {\rm vol}(\mathbb{S}^1) \,=\, \pi$.
 The real degree of $V$ is computed with the method suggested in
 Lemma \ref{lem:poincare}:
${\rm deg}_\RR (V) \,=\,1.88364. $
We estimate the length of the Trott curve to be the product of these two numbers: $\, 5.91763$. Note that $5.91763$ does \emph{not} estimate the length of the affine curve depicted in Figure \ref{fig:trott2},
but it is the length of the projective curve  defined by the
homogenization of the polynomial (\ref{eq:trotteqn}).
\end{example}

\begin{remark}
Our discussion in this subsection focused on real projective varieties.
For affine varieties $V\subset \mathbb{R}^n$ there is a
formula similar to Theorem \ref{kinematic}. By \cite[(14.70)]{Santalo},
$$\mathrm{vol}(V) \,\,=\,\, \, \frac{O_{n-d}\cdots O_1}{O_n\cdots O_{d+1}}\cdot \
\int_{L \cap V\neq \emptyset}\#(V\cap L) \,\mathrm{d} L,\qquad d=\mathrm{dim}\,V,$$
where $\mathrm{d} L$ is the density of affine $(n-d)$-planes in $\mathbb{R}^n$ from \cite[Section 12.2]{Santalo}, $\mathrm{vol}(\cdot)$ is Lebesgue measure in~$\mathbb{R}^n$ and $O_m:=\mathrm{vol}\,(\mathbb{S}^m)$. The problem with using this formula is that in general we do not know how to sample from the density $\mathrm{d}L$ given $L\cap V \neq \emptyset$. The reason is that this distribution depends on $\mathrm{vol}(V)$--which we were trying to compute in the first place.

Suppose that the variety $V$ is the image of a parameter space over which integration is easy.
This holds for $V = \mathrm{SO}(3)$, by (\ref{eq:SO3para}). For such cases, here is
an alternative approach for computing the volume: pull back the volume form on $V$ to
the parameter space and integrate it there. This can be done either numerically or --if possible-- symbolically.
Note that this method is not only applicable to smooth varieties, but to any differentiable manifold.
\end{remark}

\section{Software and Experiments}\label{sec:experiments_and_data}
In this section, we demonstrate how the methods from previous sections work in practice. The implementations are available in our \texttt{Julia} package \texttt{LearningAlgebraicVarieties}.
We offer a step-by-step tutorial.
To install our software, start a \texttt{Julia} session and type
\begin{lstlisting}
Pkg.clone("https://github.com/PBrdng/LearningAlgebraicVarieties.git")
\end{lstlisting}
After the installation, the next command is
\begin{lstlisting}
using LearningAlgebraicVarieties
\end{lstlisting}
This command loads all the functions into the current session. Our package accepts a dataset~$\Omega$ as a matrix whose~\emph{columns} are the data points $u^{(1)},u^{(2)},\ldots,u^{(m)}$ in $\RR^n$.

To use the numerical algebraic geometry software \texttt{Bertini}, we must first download it from \url{https://bertini.nd.edu/download.html}. The \texttt{Julia} wrapper for \texttt{Bertini} is installed by
\begin{lstlisting}
Pkg.clone("https://github.com/PBrdng/Bertini.jl.git")
\end{lstlisting}
The code \texttt{HomotopyContinuation.jl} accepts input from
the polynomial algebra package
 \texttt{MultivariatePolynomials.jl}\footnote{\url{https://github.com/JuliaAlgebra/MultivariatePolynomials.jl}}.
The  former is described in \cite{BT} and it is installed using
\begin{lstlisting}
Pkg.add("HomotopyContinuation")
\end{lstlisting}

We apply our package to three datasets.
The first comes from the group $\mathrm{SO}(3)$,
the second from the projective variety $V$ of $2\times3$-matrices $(x_{ij})$ of rank $1$,
and the third from the conformation space of \emph{cyclo-octane}.

In the first two cases, we draw the samples ourselves. The introduction of~\cite{Diaconis} mentions algorithms to sample from compact groups. However, for the sake of simplicity we use the following algorithm for sampling from $\mathrm{SO}(3)$. We use \texttt{Julia}'s \lstinline{qr()}-command to compute the QR-decomposition of a random real $3\times 3$ matrix with independent standard Gaussian entries and take the $Q$ of that decomposition. If the computation is such that the diagonal entries of $R$ are all positive
then, by \cite[Theorem~1]{Mezzadri}, the matrix $Q$ is
 uniformly distributed in~$\mathrm{O}(3)$. However, in our
 case, $Q\in\mathrm{SO}(3)$ and we do not know its distribution.

Our sample from the {\em Segre variety} $V = \PP^1_\RR \times \PP^2_\RR$
  in $\PP^5_\RR$   is drawn by  independently sampling two standard Gaussian
  matrices of format $2\times 1$ and~$1\times 3$ and multiplying them.
   This procedure yields the uniform distribution on  $V$ because
   the Segre embedding is an isometry under the
   Fubini-Study metrics on $\PP^1_\RR, \PP^2_\RR$ and $\PP^5_\RR$.
    The third sample, which is $6040$ points from the conformation space of cyclo-octane, is taken from
    Adams {\it et al.}~\cite[\S. 6.3]{JavaplexTutorial}.

We provide the samples used in the subsequent experiments in the \texttt{JLD}\footnote{\url{https://github.com/JuliaIO/JLD.jl}} data format. After having installed the \texttt{JLD} package in \texttt{Julia} (\lstinline{Pkg.add("JLD")}),
 load the datasets by~typing
\begin{lstlisting}
import JLD: load
s = string(Pkg.dir("LearningAlgebraicVarieties"),"/datasets.jld")
datasets = load(s)
\end{lstlisting}

\subsection{Dataset 1: a sample from the rotation group $\mathrm{SO}(3)$}
The group $\mathrm{SO}(3)$ is a variety in the space of $3 \times 3$-matrices. It is defined by
the polynomial equations in  Example~\ref{ex:rotation}.
 A dataset containing $887$ points from $\mathrm{SO}(3)$ is loaded by typing
\begin{lstlisting}
data = datasets["SO(3)"]\end{lstlisting}
Now the current session should contain a variable \lstinline{data} that is a $9\times 887$ matrix.
We produce the dimension diagrams by typing
\begin{lstlisting}
DimensionDiagrams(data, false, methods=[:CorrSum,:PHCurve])
\end{lstlisting}
In this command, \lstinline{data} is our dataset, the Boolean value is \lstinline{true} if we suspect our variety is projective and \lstinline{false} otherwise, and \lstinline{methods} is any of
the dimension estimates \lstinline{:CorrSum}, \lstinline{:BoxCounting} \lstinline{:PHCurve}, \lstinline{:NPCA}, \lstinline{:MLE}, and \lstinline{:ANOVA}.
We can leave this unspecified and type
\begin{lstlisting}
DimensionDiagrams(data, false)
\end{lstlisting}
This command plots  all six dimension diagrams. Both outputs are shown in Figure~\ref{DDSO3}.

\begin{figure}[h!]
\begin{center}
\includegraphics[scale=0.33]{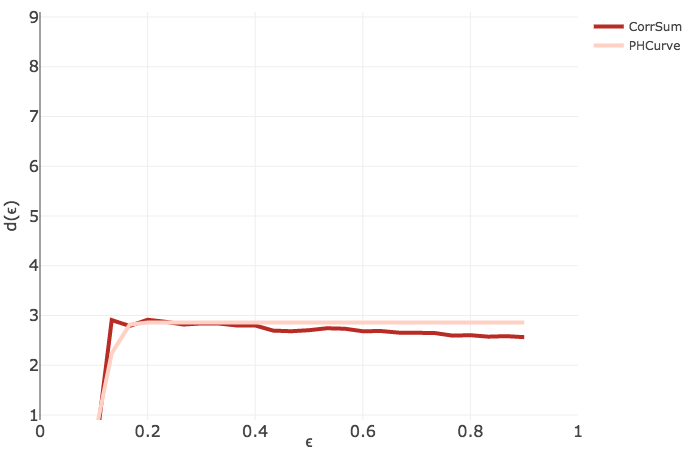}
\includegraphics[scale=0.33]{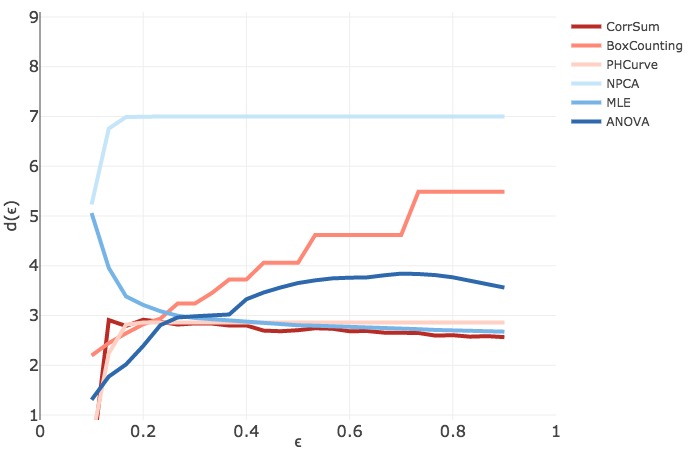}
\caption{Dimension diagrams for $887$ points in ${\rm SO}(3)$.
 The right picture shows all six diagrams described in Subsection~\ref{sub:dimension}.
 The left picture shows correlation sum and persistent homology curve dimension estimates. }\label{DDSO3}
\end{center}
\end{figure}

 Three estimates are close to $3$, so we correctly guess the true dimension of ${\rm SO}(3)$.
In our experiments we found that NPCA and Box Counting Dimension often overestimate.

We proceed by finding polynomials that vanish on the sample.
 The command we use is
\begin{lstlisting}
FindEquations(data, method, d, homogeneous_equations)
\end{lstlisting}
where \lstinline{method} is one of \texttt{:with\textunderscore svd}, \texttt{:with\textunderscore qr}, \texttt{:with\textunderscore rref}. The degree~\lstinline{d} refers to the polynomials in $R$ we are looking for.
 If \texttt{homogeneous\textunderscore equations} is set to \lstinline{false}, then  we search in $R_{\leq d}$.
   If we look for a projective variety then
    we set it to \lstinline{true}, and $R_d$ is used.
For our sample from ${\rm SO}(3)$ we use the \lstinline{false} option.
Our sample size $m=887$ is large enough to determine equations up to $d=4$.
The following results are  found by the various methods:

\medskip

\begin{center}
\begin{tabular}{cccc}
$d$ & method & number of linearly independent equations  \\ \hline
$1$ & SVD & 0 \\
$2$ & SVD & 20 \\
$2$ & QR & 20 \\
$2$ & RREF & 20\\
$3$ & SVD & 136\\
$4$ & SVD & 550
\end{tabular}
\end{center}
The correctness of these numbers can be verified by
computing  ({\it e.g.~}using \texttt{Macaulay2})
the affine Hilbert function  \cite[\S 9.3]{CLO} of the ideal
with the generators in Example \ref{ex:rotation}.
If we type
\begin{lstlisting}
f = FindEquations(data, :with_qr, 2, false)
\end{lstlisting}
then we get a list of 20 polynomials that vanish on the sample.

The output is often difficult to interpret, so it can be desirable to round the coefficients:
 \begin{lstlisting}
 round.(f)
\end{lstlisting}
The precision can be specified, the default being to the nearest integer. We obtain the output
  \[
  \begin{array}{l}
  x_1x_4 + x_2x_5 + x_3x_6,\\
x_1x_7 + x_2x_8 + x_3x_9.
\end{array}
\]
Let us continue analyzing the 20 quadrics saved in the variable \lstinline{f}. We use the following command in \texttt{Bertini} to determine whether our variety is reducible and compute its degree:
\begin{lstlisting}
import Bertini: bertini
bertini(round.(f), TrackType = 1, bertini_path = p1)
\end{lstlisting}
Here \lstinline{p1} is the path to the \texttt{Bertini} binary. \texttt{Bertini} confirms that the variety is irreducible of degree $8$ and dimension $3$
(cf.~Figure \ref{DDSO3}).

Using \texttt{Eirene} we construct the barcodes depicted in Figure~\ref{BarcodeSO3}. We run the following commands to plot barcodes for a random subsample of $250 $  points in ${\rm SO}(3)$:
\begin{lstlisting}
# sample 250 random points
i = rand(1:887, 250)
# compute the scaled Euclidean distances
dists = ScaledEuclidean(data[:,i])
# pass distance matrix to Eirene and plot barcodes in dimensions up to 3
C = eirene(dists, maxdim = 3)
barcode_plot(C, [0,1,2,3], [8,8,8,8])
\end{lstlisting}
The first array \lstinline{[0,1,2,3]} of the \lstinline{barcode_plot()} function specifies
the desired dimensions. The second array \lstinline{[8,8,8,8]} selects the 8 largest barcodes for each dimension. If the user does not pass the last array to the function, then all the barcodes are plotted.
 To compute barcodes arising from the complex specified in~(\ref{ellipsoids_relaxation}), we type
\begin{lstlisting}
dists = EllipsoidDistances(data[:,i], f, 1e-5)
C = eirene(dists, maxdim = 3)
barcode_plot(C, [0,1,2,3], [8,8,8,8])
\end{lstlisting}
Here, \lstinline{f = FindEquations(data, :with_qr, 2, false)} is the vector of $20$ quadrics. The third
argument of \lstinline{EllipsoidDistances}
is the parameter $\lambda$ from  (\ref{ellipsoids_relaxation}).
It is here set to $10^{-5}$.

\begin{figure}[h]
\begin{center}
\includegraphics[width=0.47\textwidth]{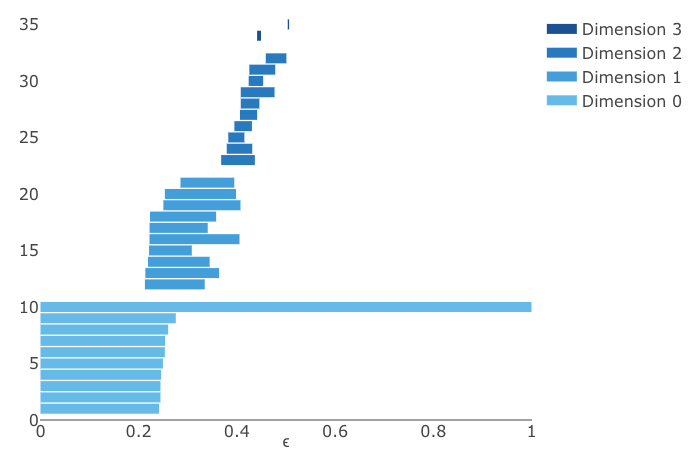} \quad
\includegraphics[width=0.47\textwidth]{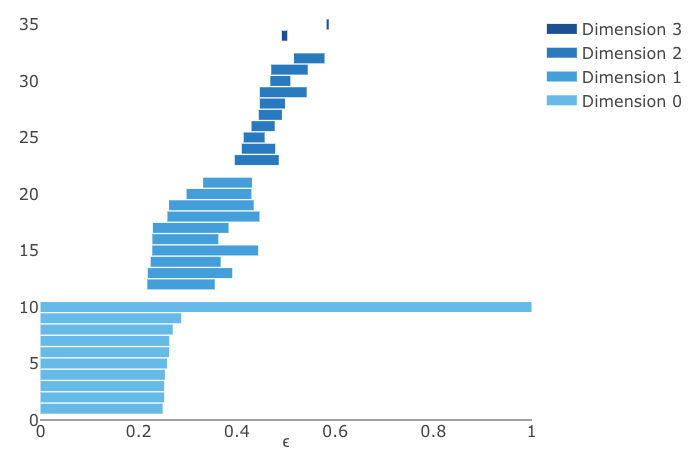}
\caption{Barcodes for a subsample of  $250$ points from $ \mathrm{SO}(3)$.
The left picture shows the standard Vietoris-Rips complex, while that on the right comes from the ellipsoid-driven complex (\ref{ellipsoids_relaxation}). Neither reveals any structures in dimension 3, though
$V = \mathrm{SO}(3)$ is diffeomorphic to $\PP^{3}_\RR$ and has a non-vanishing  $H_3(V, \ZZ)$.
}\label{BarcodeSO3}
\end{center}
\end{figure}

Our subsample of $250$ points is not dense enough to reveal features except in dimension~$0$.
Instead of randomly selecting the points in the subsample, one could also use the \emph{sequential maxmin landmark
selector}~\cite[\S 5.2]{JavaplexTutorial}. Subsamples chosen this way tend to cover the dataset and
to be spread apart from each other. One might also improve the result by constructing
different complexes, for example, the lazy witness
complexes in~\cite[\S 5]{JavaplexTutorial}. However, this is not implemented in \texttt{Eirene} at present.

\subsection{Dataset 2: a sample from the variety of rank one $2\times3$-matrices}

The second sample consists of $200$ data points from the
Segre variety $\PP^1_\RR \times \PP^2_\RR$ in $\PP^5_\RR$,
that is Example \ref{ex:rankone} with $m=n=3,\, r=1$.
 We load our sample into the {\tt Julia} session by typing
\begin{lstlisting}
data = datasets["2x3 rank one matrices"]
\end{lstlisting}
We try the \lstinline{DimensionDiagrams} command once with the Boolean value set to \lstinline{false} (Euclidean space) and once with the value set to \lstinline{true} (projective space).
The diagrams are depicted in Figure~\ref{DD2times3}.
 As the variety $V$ naturally lives in $\PP^5_\RR$,
 the projective diagrams yield better estimates and hint that the dimension is either $3$ or $4$. The true  dimension in $\PP^5_\RR$ is $3$.

\begin{figure}[h]
\begin{center}
\includegraphics[scale=0.34]{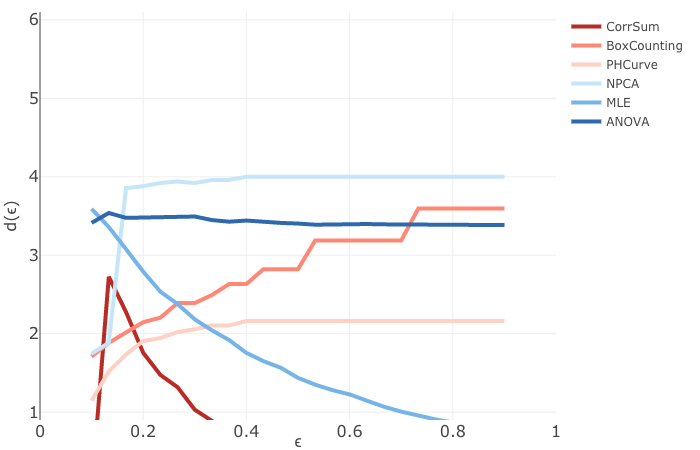}\includegraphics[scale=0.34] {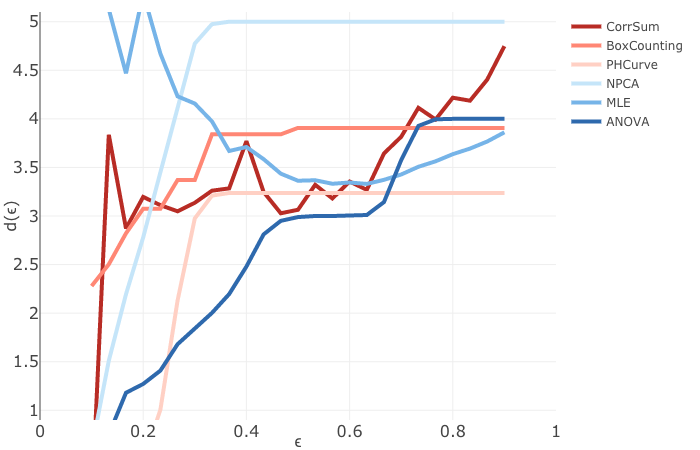}
\caption{Dimension diagrams for 200 points on the variety of $2 \times 3$
 matrices of rank $1$.
 The left picture shows dimension diagrams for the estimates in $\RR^{6}$.
 The right picture shows those for projective space $\PP^{5}_\RR$. }\label{DD2times3}
\end{center}
\end{figure}

The next step is to find polynomials that vanish.
We set \texttt{homogeneous\textunderscore equations} to \lstinline{true} and $d=2$: \lstinline{f = FindEquations(data, method, 2, true)}.
All three methods, SVD, QR and RREF, correctly report
the existence of three quadrics.
The equations obtained with QR after rounding are as desired:
$$
x_1x_4 - x_2x_3=0,\quad
x_1x_6 - x_2x_5=0,\quad
x_3x_6 - x_4x_5=0.$$
Running \texttt{Bertini} we verify that $V$
   is an irreducible variety of dimension $3$ and degree $3$.

We next estimate the volume of $ V $
using the formula in Theorem~\ref{kinematic}.
We intersect $V$ with $500$ random planes in $\PP^5_\RR$
  and count the number of real intersection points. We must initialize $500$ linear functions with Gaussian entries involving the same variables as \lstinline{f}:
\begin{lstlisting}
import MultivariatePolynomials: variables
X = variables(f)
Ls = [randn(3, 6) * X for i in 1:500]
\end{lstlisting}
Now, we compute the real intersection points using \texttt{HomotopyContinuation.jl}.
\begin{lstlisting}
using HomotopyContinuation
r = map(Ls) do L
  # we multiply with a random matrix to make the system square
  S = solve([randn(2,3) * f; L])
  # check which are solutions to f and return the real ones
  vals = [[fi(X => s) for fi in f] for s in solutions(S)]
  i = find(norm.(vals) .< 1e-10)
  return length(real(S[i]))
end
\end{lstlisting}
The command \lstinline{pi^2 * mean(r)} reports an estimate of $19.8181$ for the volume of $V$.
The true volume of $V$ is the
length of $ \PP^1_\RR $ times
the area of $ \PP^2_\RR$, which is
$\pi \cdot (2 \pi) = 19.7392$.

\begin{figure}[!h]
\begin{center}
\includegraphics[width=0.47\textwidth]{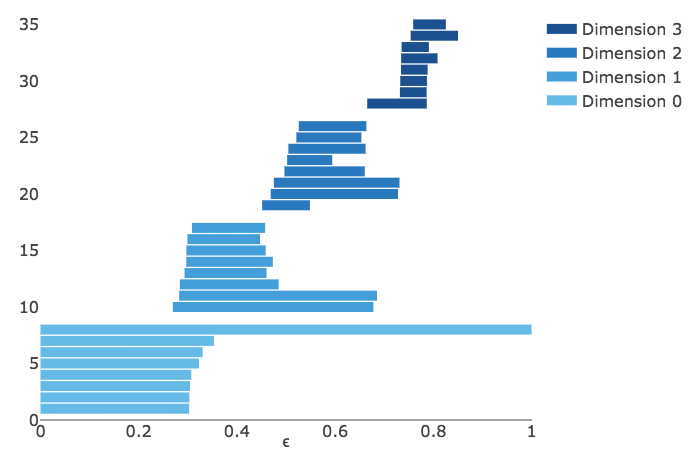} \quad
\includegraphics[width=0.47\textwidth]{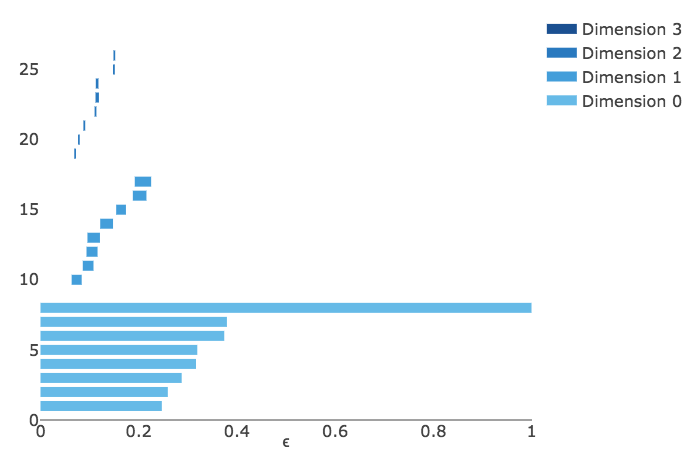}
\caption{Barcodes for $200$ points on the Segre variety of $2\times 3$  matrices of rank $1$.
The true mod $2$ Betti numbers of
     $\mathbb{P}^1_\mathbb{R} \times \mathbb{P}^2_\mathbb{R}$ are $1,2,2,1$.
  The left picture shows the barcodes for the usual Vietoris-Rips complex computed using scaled Fubini-Study distance. The right picture is computed using the scaled Euclidean distance.
             Using the Fubini-Study distance yields better results.
}\label{2x3 rank one matrices}
\end{center}
\end{figure}

Using \texttt{Eirene}, we construct the barcodes depicted in Figure~\ref{2x3 rank one matrices}. The barcodes constructed using Fubini-Study distance detect persistent features in dimensions $0$, $1$ and $2$.
The barcodes using Euclidean distance only have a strong topological signal in dimension $0$.

\subsection{Dataset 3: conformation space of cyclo-octane}\label{sub:cyclo-octane}

Our next variety $V$ is the  conformation space of the molecule cyclo-octane $C_8 H_{16}$.
We use the same sample $\Omega$ of $ 6040$ points that was analyzed in~\cite[\S.6.3]{JavaplexTutorial}.
Cyclo-octane consists of eight carbon atoms arranged in a ring and each bonded to a pair of hydrogen atoms (see Figure~\ref{cyclo_pic}).
The location of the hydrogen atoms is determined by that of the carbon atoms due to energy minimization. Hence, the conformation space of cyclo-octane consists
 of all possible spatial arrangements, up to rotation and translation, of the ring of carbon atoms.

\begin{figure}[h!]
\begin{center}
 \includegraphics[width = 0.23\textwidth]{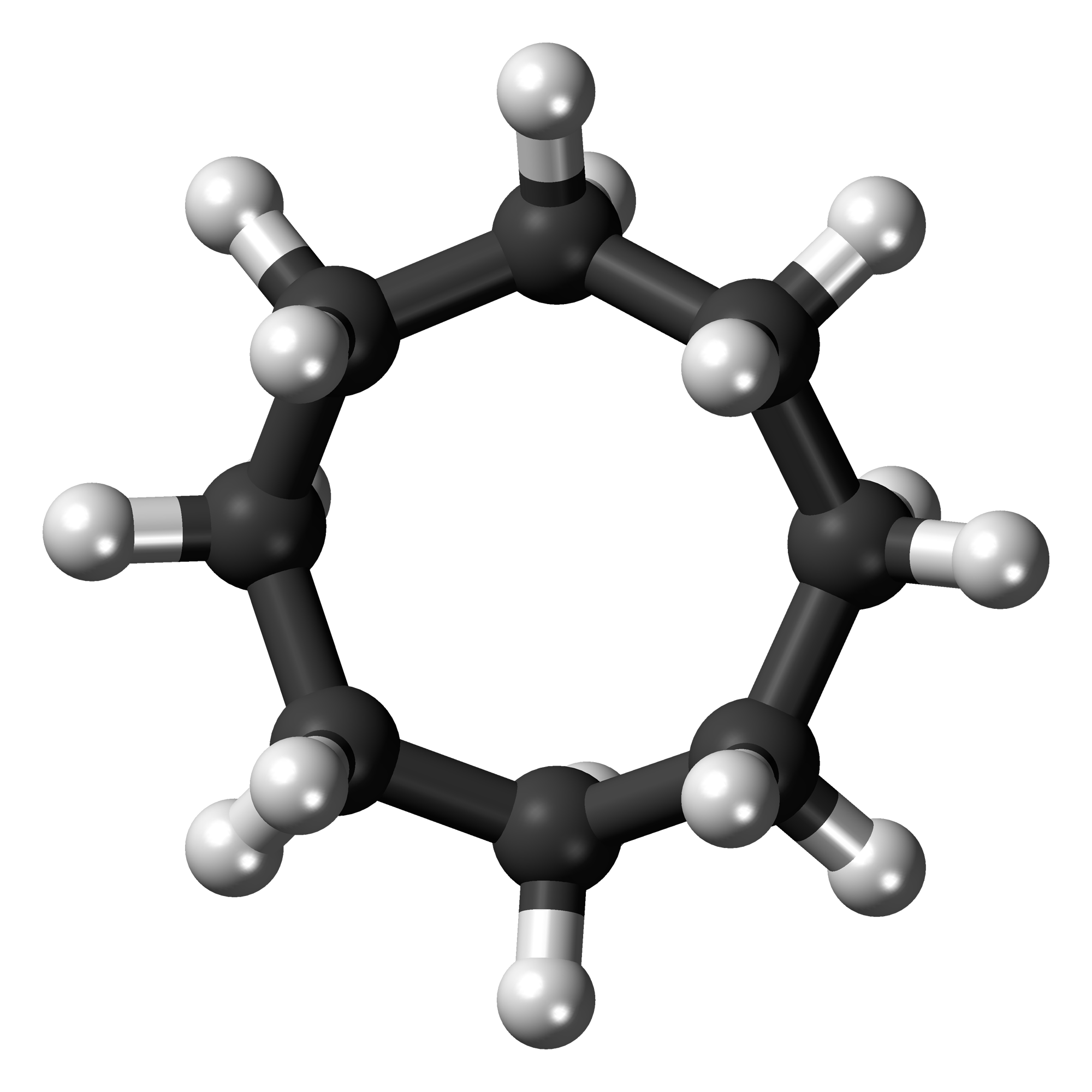}
 \caption{A cyclo-octane molecule. \label{cyclo_pic}}
\end{center}
\end{figure}

Each conformation is a point in $\mathbb{R}^{24} = \mathbb{R}^{8\cdot 3}$, which represents the coordinates of the carbon atoms $\{z_0, \dots, z_7\}\subset\mathbb{R}^3$. Every carbon atom $z_i$ forms an isosceles triangle with its two neighbors with angle $\frac{2\pi}{3}$ at $z_i$. By the law of cosines, there is a constant $c>0$ such that the squared distances $\,d_{i,j}=\Vert z_i-z_j\Vert^2\,$  satisfy
\begin{equation}
\label{eq:16quadrics}
d_{i,i+1}\,=\,c \quad \hbox{and} \quad d_{i,i+2}\,=\,\frac{8}{3}c
\quad \hbox{for all $i  \,\, $ (mod~$8$).}
\end{equation}
 Thus we expect to find $16$ quadrics from the given data. In our sample we have $c\approx 2.21$.

 The conformation space is defined modulo translations and rotation; i.e., modulo the $6$-dimensional
 {\em group  of rigid motions} in $\mathbb{R}^3$. An implicit representation of this
 quotient space arises by substituting (\ref{eq:16quadrics}) into the
 Sch\"onberg matrix of Example \ref{ex:8schoenberg}  with $p=8$ and $r=3$.

 However, the given $\Omega$ lives in  $\mathbb{R}^{24} = \mathbb{R}^{8\cdot 3}$,
 {\em i.e.~}it uses the coordinates of the carbon atoms.
  Since the group has dimension $6$, we expect to
 find~$6$ equations that encode a \emph{normal form}.
 That normal form is a distinguished representative from each orbit  of the group action.

\begin{figure}[h!]
\begin{center}
\includegraphics[scale=0.45]{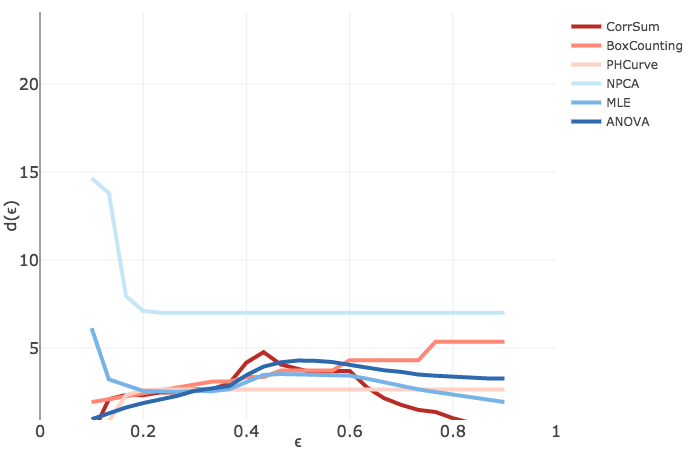}
\caption{Dimension diagrams for $420$ points from the cyclo-octane dataset.\label{dim_cyclo_plot}}
\end{center}
\end{figure}

Brown \emph{et al.} \cite{Brown} and Martin \emph{et al.}~\cite{Martin} show that the conformation space of cyclo-octane is the union of a sphere with a Klein bottle, glued together along two circles of singularities. Hence,
the dimension of $V$ is $2$, and it has Betti numbers $1,1, 2$ in mod $2$ coefficients.

To accelerate the computation of  dimension diagrams, we took a random subsample of
$ 420$ points. The output is displayed in Figure \ref{dim_cyclo_plot}.
A dimension estimate of $2$ seems reasonable:

\begin{lstlisting}
i = rand(1:6040, 420)
DimensionDiagrams(data[:,i], false)
\end{lstlisting}

The dataset $\Omega$ is noisy: each point is rounded to $4$ digits.
Direct use of \lstinline{FindEquations()} yields no polynomials vanishing on $\Omega$.
The reason is that our code sets the tolerance with
  the numerical rank in (\ref{eq:numericalrank}).
For noisy samples, we must set the tolerance manually. To get a sense for
adequate tolerance values, we first compute the multivariate Vandermonde matrix $U_{\leq d}(\Omega)$ and then plot the       base 10 logarithms of its singular values. We start with $d=1$.
\begin{lstlisting}
import PlotlyJS
M = MultivariateVandermondeMatrix(data, 1, false)
s = log10.(svdvals(M.Vandermonde))
p = PlotlyJS.scatter(; y=s, mode="lines", line_width = 4)
PlotlyJS.Plot(p)
\end{lstlisting}

This code produces the left plot in Figure \ref{svdvals}. This graph shows a clear drop
from $-0.2$ to $-2.5$. Picking the in-between value $-1$, we set the
 tolerance at $\tau = 10^{-1}$. Then, we type
\begin{lstlisting}
f = FindEquations(M, method, 1e-1)
\end{lstlisting}
where \lstinline{method} is one of our three methods. For this tolerance value we find six linear equations.
Computed using \texttt{:with\textunderscore qr} and rounded to three digits, they are as follows:
{\footnotesize
\begin{align*}
\mathrm{1.}\quad&-1.2x_1 - 3.5x_2 + 1.2x_3 - 4.2x_4 - 4.1x_5 + 3.9x_6 - 5.4x_7 - 2.0x_8 + 4.9x_9 - 5.4x_{10} + 2.2x_{11} + 4.9x_{12}\\
&\hspace{2cm}- 4.2x_{13} + 4.3x_{14} + 3.8x_{15} - 1.1x_{16} + 3.6x_{17} + x_{18}\\[0.2cm]
\mathrm{2.}\quad&-0.6x_1 - 1.3x_2 - 2.0x_4 - 1.3x_5 - 2.5x_7 - 2.5x_{10} + x_{11} - 2.0x_{13} + 2.4x_{14} - 0.5x_{16} + 2.3x_{17} + x_{20}\\[0.2cm]
\mathrm{3.}\quad&2.5x_1 + 8.1x_2 - 4.0x_3 + 9.2x_4 + 9.6x_5 - 10.5x_6 + 11.4x_7 + 4.7x_8 - 11.5x_9 + 12.6x_{10} - 5.1x_{11}\\&\hspace{2cm}  - 10.5x_{12} + 9.4x_{13} - 10.0x_{14} - 6.5x_{15} + 1.9x_{16} - 8.3x_{17} - 1.1x_{19} + x_{21}\\[0.2cm]
\mathrm{4.}\quad&x_1 + x_4 + x_7 + x_{10} + x_{13} + x_{16} + x_{19} + x_{22}\\[0.2cm]
\mathrm{5.}\quad&0.6x_1 + 2.3x_2 + 2.0x_4 + 2.3x_5 + 2.5x_7 + x_8 + 2.5x_{10} + 2.0x_{13} - 1.4x_{14} + 0.5x_{16} - 1.3x_{17} + x_{23}\\[0.2cm]
\mathrm{6.}\quad&-1.3x_1 - 4.6x_2 + 3.8x_3 - 4.9x_4 - 5.5x_5 + 7.5x_6 - 6.0x_7 - 2.7x_8 + 7.5x_9 - 7.2x_{10} + 2.9x_{11} + 6.5x_{12}\\
&\hspace{2cm} - 5.2x_{13} + 5.7x_{14} + 3.7x_{15} - 0.8x_{16} + 4.7x_{17} + 1.1x_{19} + x_{24}
\end{align*}}
We add the second and the fifth equation, and we add the first, third and sixth, by typing \lstinline{f[2]+f[5]} and \lstinline{f[1]+f[3]+f[6]} respectively. Together with \lstinline{f[1]}
  we get the following:
  \begin{equation}
  \label{eq:3by8}
  \begin{matrix}
&x_1 + x_4 + x_7 + x_{10} + x_{13} + x_{16} + x_{19} + x_{22}\\
&x_2 + x_5 + x_8 + x_{11} + x_{14} + x_{17} + x_{20} + x_{23}\\
&x_3 + x_6 + x_9 + x_{12} + x_{15} + x_{18} + x_{21} + x_{24}
\end{matrix}
\end{equation}
We learned that centering is the normal form for translation. We also learned
that the columns in (\ref{eq:3by8}) represent the eight atoms.
 Since we found $6$ linear equations, we believe that the three~$3$ remaining equations determine the normal form for rotations. However, we do not yet understand how the three degrees of rotation produce three linear constraints.

\begin{figure}[h!]
 \begin{center}
\includegraphics[width = 0.47\textwidth]{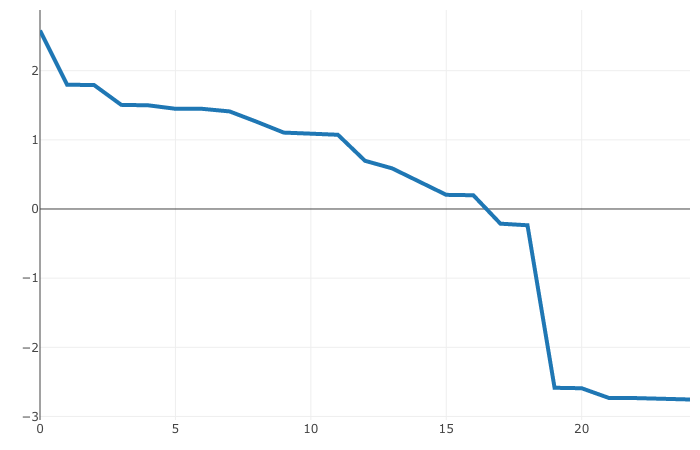}\quad \includegraphics[width = 0.47\textwidth]{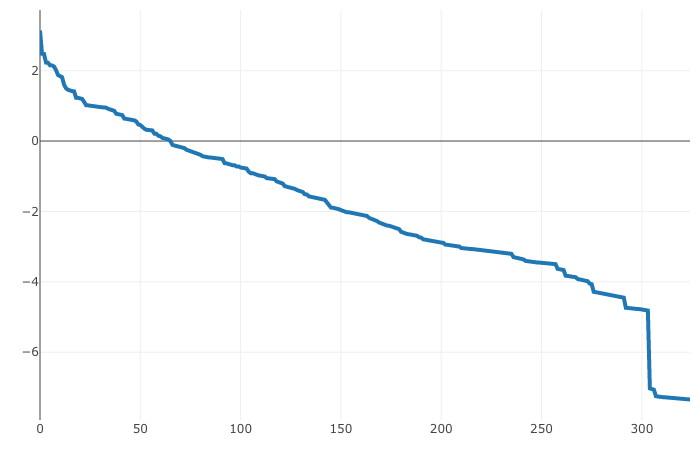}
\caption{Logarithms (base 10) of the singular values of the matrices $U_{\leq 1}(\Omega)$ (left)
and $U_{\leq 2}(\Omega)$ (right).\label{svdvals}}
\end{center}
\end{figure}

We next proceed to equations of degree $2$.
Our hope is to find the $16$ quadrics in (\ref{eq:16quadrics}).
 Let us check whether this works.
   Figure \ref{svdvals} on the right shows the logarithms of the singular values of
     the multivariate Vandermonde matrix $U_{\leq 2}(\Omega)$. Based  on this we set
 $\tau = 10^{-6}$.

The command \lstinline{FindEquations(M, :with_svd, 2, 1e-6)} reveals~$21 $ quadrics.
 However,  these are the pairwise products of the $6$ linear equations we found earlier.
    An explanation for why we cannot find the $16$ distance quadrics is as follows. Each of the $6$ linear equations evaluated at the points in $\Omega$ gives about $10^{-3}$ in our numerical computations. Thus their products equal about $10^{-6}$. The distance quadrics equal about $10^{-3}$.
  At tolerance $10^{-6}$, we miss them. Their values are much larger than the $10^{-6}$
  from the $21$ redundant quadrics. By randomly rotating and translating each data point,
 we can manipulate the dataset such that \lstinline{FindEquations} together with a tolerance value $\tau = 10^{-1}$ gives the $16$ desired quadrics. The fact that no linear equation vanishes on the manipulated dataset provides more evidence that 3 linear equations are determining the normal form for rotations.

\begin{figure}[h!]
 \begin{center}
\includegraphics[width = 0.48\textwidth]{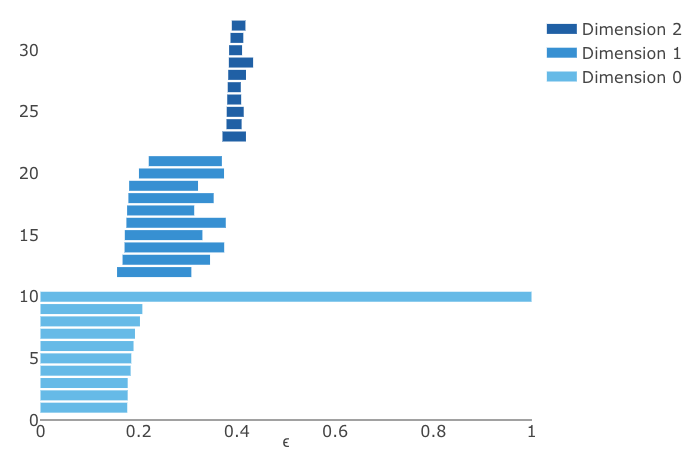} \quad
\includegraphics[width = 0.48\textwidth]{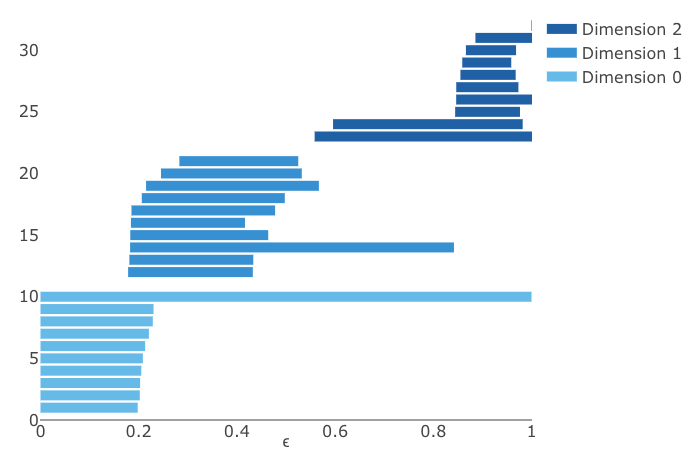}
\caption{Barcodes for a subsample of 500 points from the cyclo-octane dataset. The left plot shows the barcodes for the usual Vietoris-Rips complex. The right picture shows barcodes for the
ellipsoid-driven simplicial complex in
(\ref{ellipsoids_relaxation}). The right barcode correctly captures the homology
of the conformation space. \label{barcode_cyclo}}
\end{center}
\end{figure}

The cyclo-octane dataset was used in~\cite[\S.6.3]{JavaplexTutorial} to demonstrate
that persistent homology can efficiently recover the homology groups of the conformation space.
We confirmed this result using our software.
 We determined the barcodes for a random subsample of 500 points. In addition to
 computing with Vietoris-Rips complexes, we use the 6 linear equations and the 16 distance quadrics to produce the ellipsoid-driven barcode plots. The results are displayed in Figure \ref{barcode_cyclo}. The barcodes  from the usual Vietoris-Rips complex do not capture the correct homology groups, whereas the barcodes arising from our new complex (\ref{ellipsoids_relaxation}) do.

 \bigskip
 \bigskip

\noindent
{\bf Acknowledgements.} We thank Henry Adams, Mateo D\'iaz,
Jon Hauenstein, Peter Hintz, Ezra Miller, Steve Oudot, Benjamin Schweinhart,
 Elchanan Solomon, and Mauricio Velasco for helpful discussions.
 Bernd Sturmfels and Madeleine Weinstein acknowledge support from the
 US National Science Foundation.

 \bigskip
 \bigskip
 \bigskip

\footnotesize{

}

\bigskip
\bigskip

\noindent
\footnotesize {\bf Authors' addresses:}

\smallskip

\noindent Paul Breiding,  \ MPI-MiS Leipzig
\hfill {\tt Paul.Breiding@mis.mpg.de}

\noindent
Sara Kali\v snik,  \ MPI-MiS Leipzig and Wesleyan University
\hfill {\tt skalisnikver@wesleyan.edu}

\noindent Bernd Sturmfels,
 \  MPI-MiS Leipzig and
UC  Berkeley \hfill  {\tt bernd@mis.mpg.de}

\noindent Madeleine Weinstein,
UC  Berkeley \hfill {\tt maddie@math.berkeley.edu}

\end{document}